\newtheorem{theorem}{Theorem}[section]
\newtheorem{lemma}[theorem]{Lemma}
\newtheorem{corollary}[theorem]{Corollary}
\begin{document}

\begin{frontmatter}

\title{Tutte polynomials of fan-like graphs with applications in benzenoid systems}

\author{Tianlong Ma, Xian'an Jin, Fuji Zhang\\
\small School of Mathematical Sciences\\[-0.8ex]
\small Xiamen University\\[-0.8ex]
\small P. R. China\\
\small\tt Email: tianlongma@aliyun.com, xajin@xmu.edu.cn\\
    \small ~~~~\tt fjzhang@xmu.edu.cn}

\begin{abstract}
We study the computation of the Tutte polynomials of fan-like graphs and obtain expressions of their Tutte polynomials via generating functions. As applications, Tutte polynomials, in particular, the number of spanning trees, of two kinds of benzenoid systems, i.e. pyrene chains and triphenylene chains, are obtained.
\end{abstract}

\begin{keyword}
Tutte polynomial\sep fan-like graph\sep benzenoid system\sep spanning tree\sep generating function
\MSC[2020] 05C92\sep 05C31\sep 92E10
\end{keyword}

\end{frontmatter}


\section{Introduction}

Let $G=(V, E)$ be a graph with the set $V$ of vertices and the set $E$ of edges. The \emph{Tutte polynomial} of the graph $G$, denoted by $T(G;x, y)$, was introduced by Tutte \cite{Tutte} in 1954 as a generalization of chromatic
polynomials, which can be defined by the closed formula
\begin{eqnarray*}
T(G;x, y)=\sum_{A\subseteq E}(x-1)^{r(E)-r(A)}(y-1)^{|A|-r(A)},
\end{eqnarray*}
where $r(A)=|V|-\omega(V,A)$, and $\omega(V,A)$ denotes the number of connected components in
the graph $(V,A)$. It can also be calculated recursively, which can be viewed as an alternative definition, by using the following deletion-contraction formula, together with the initial condition that $T(G;x,y)=1$ if $E=\emptyset$. Let $e\in E$ be an edge of the graph $G$. Then
\begin{eqnarray*}
T(G; x, y)=\left \{
\begin{array}{ll}
xT(G/e;x, y),  &\text{ if } $e$ \text{ is a bridge};\\
yT(G-e;x, y),  &\text{ if } $e$ \text{ is a loop};\\
T(G-e;x, y)+T(G/e;x, y), &\text{ otherwise,}
\end{array}
\right.
\end{eqnarray*}
where $G-e$ denotes the graph obtained from $G$ by deleting the
edge $e$ and $G/e$ denotes the graph obtained from $G$ by contracting the
edge $e$, that is, deleting $e$ firstly and then identifying its two end-vertices into a new vertex.

The Tutte polynomial of a graph contains a large amount of information about the graph. For example, $T(G;1,1)$ counts the number of spanning trees of the graph $G$. It can be specialized to the chromatic and flow polynomials of a graph, the all terminal reliability probability of a network and the partition function of a $q$-state Potts model. Moreover, it can also be specialized to the Jones polynomial of an alternating knot or link and the weight enumerator of a linear code over $\mathrm{GF}(q)$.

However it is $\#P$ hard to compute the Tutte polynomial in general \cite{Jaeger}. Hence, various techniques (transfer matrix method, subgraph-decomposition trick, etc) have been developed to obtain the Tutte polynomial of many graph families appearing in different fields including the field of mathematical chemistry. For example, in \cite{Fath-Tabar}, a recursive formula of the Tutte polynomial of benzenoid chains was obtained. In \cite{Do}, the formula was extended to $k$-uniform polygonal chains and
when $k=6$, it is a benzenoid chain. For more general polycyclic chains of polygons, a general scheme was discussed in \cite{Dob} for computing many polynomials with the deletion-contraction property. In \cite{Gong}, an explicit expression for the Tutte polynomial of catacondensed benzenoid systems with exactly one branched hexagon was obtained. In \cite{Chen}, Tutte polynomials of alternating polycyclic chains were obtained. Recently, a reduction formula for Tutte polynomial
of any catacondensed benzenoid system was obtained by three classes of transfer matrices in \cite{Ren}.

In \cite{Li}, the authors studied sextet polynomials of hexagonal systems via generating functions, which motivates us to study the Tutte polynomial of benzenoid systems with recursive structure via generating functions. This is realized by computing the Tutte polynomial of fan-like graph families which are the planar duals of some benzenoid systems with repeated substructures. Fan-like graphs are also used to compute the Kauffman bracket polynomials of rational links, i.e. 2-bridge links in \cite{Jin1}. As applications, Tutte polynomials, in particular, the number of spanning trees, of two kinds of benzenoid systems, i.e. pyrene chains and triphenylene chains, are obtained.

\section{Preliminaries}

The first two theorems are well-known which can be found in some textbooks on graph theory such as \cite{Bollobas}.

\begin{theorem}\label{dual}
Let $D(G)$ be the planar dual of a plane graph $G$. Then
\begin{eqnarray}
T(G; x, y)=T(D(G); y,x).
\end{eqnarray}
\end{theorem}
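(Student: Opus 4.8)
The plan is to prove the duality relation $T(G;x,y)=T(D(G);y,x)$ by induction on the number of edges, using the deletion-contraction recurrence as the engine. The crucial structural fact I will rely on is the classical correspondence between operations on a plane graph and on its dual: for a non-loop, non-bridge edge $e$ of $G$, deletion in $G$ corresponds to contraction in $D(G)$ and contraction in $G$ corresponds to deletion in $D(G)$; that is, $D(G-e)=D(G)/e^{*}$ and $D(G/e)=D(G)-e^{*}$, where $e^{*}$ is the dual edge of $e$. Equally important is that this correspondence swaps the two degenerate edge types: $e$ is a bridge in $G$ if and only if $e^{*}$ is a loop in $D(G)$, and $e$ is a loop in $G$ if and only if $e^{*}$ is a bridge in $D(G)$.

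\emph{First} I would set up the induction on $|E(G)|$. For the base case, when $E(G)=\emptyset$ the graph $G$ is a disjoint union of isolated vertices, its dual has no edges either, and both sides equal $1$, so the identity holds. \emph{Next}, for the inductive step I would fix an edge $e\in E(G)$ and split into the three cases dictated by the recurrence. If $e$ is a bridge, then $e^{*}$ is a loop in $D(G)$; by the recurrence $T(G;x,y)=xT(G/e;x,y)$, while $T(D(G);y,x)=xT(D(G)-e^{*};y,x)$ because the loop rule contributes the first variable of $T(D(G);y,x)$, namely $x$. Since $D(G)-e^{*}=D(G/e)$, the inductive hypothesis applied to $G/e$ closes this case. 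The loop case is symmetric, exchanging the roles of the two variables.

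\emph{Finally}, in the generic case where $e$ is neither a bridge nor a loop, $e^{*}$ is likewise neither a loop nor a bridge in $D(G)$, so I would apply the ordinary two-term recurrence on both sides:
\begin{eqnarray*}
T(G;x,y)=T(G-e;x,y)+T(G/e;x,y),
\end{eqnarray*}
and, swapping variables,
\begin{eqnarray*}
T(D(G);y,x)=T(D(G)-e^{*};y,x)+T(D(G)/e^{*};y,x).
\end{eqnarray*}
Matching $D(G)-e^{*}=D(G/e)$ and $D(G)/e^{*}=D(G-e)$ and invoking the inductive hypothesis on the two smaller graphs $G-e$ and $G/e$ term by term yields the result. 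The main obstacle is not the algebra of the recurrence but the careful verification of the duality dictionary, especially the claim that contracting in the primal equals deleting the dual edge in the dual while preserving planarity and keeping track of which edge in $D(G)$ is the dual of $e$; one must also handle the bookkeeping when $e$ is a bridge (so $G$ is disconnected by its removal) to be sure the dual operation behaves as claimed. These facts are standard for plane graphs, so I would cite them from the textbook reference rather than reprove them.
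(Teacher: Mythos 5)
The paper offers no proof of this theorem: it is stated as well known, with a pointer to Bollob\'as's textbook, so there is no in-paper argument to compare against line by line. Your deletion--contraction induction is a correct, standard proof, and the case analysis is sound: for a bridge $e$ you use $T(G;x,y)=xT(G/e;x,y)$ together with $D(G/e)=D(G)-e^{*}$ (valid because a bridge is not a loop), the loop case is the mirror image using $D(G-e)=D(G)/e^{*}$, and in the generic case the two-term recurrence plus the dictionary $D(G-e)=D(G)/e^{*}$, $D(G/e)=D(G)-e^{*}$ closes the induction. One wording slip worth fixing: in the bridge case you say the loop rule contributes ``the first variable of $T(D(G);y,x)$''; the loop rule contributes the \emph{second} argument, which in $T(D(G);y,x)$ is indeed $x$, so your displayed identity $T(D(G);y,x)=xT(D(G)-e^{*};y,x)$ is correct even though the phrase justifying it is off. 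For comparison, the route implicit in the paper's citation avoids induction entirely: it uses the subset expansion defining $T$ and the rank relation $r_{D(G)}(E^{*}\setminus A^{*})=|E|-|A|+r_G(A)-r_G(E)$, so that the complementation bijection $A\mapsto E^{*}\setminus A^{*}$ exactly swaps the two exponents $r(E)-r(A)$ and $|A|-r(A)$, giving $T(D(G);x,y)=T(G;y,x)$ in a few lines. That argument is shorter once the rank identity is in hand; yours is more elementary but leans on the duality dictionary (bridge $\leftrightarrow$ loop, deletion $\leftrightarrow$ contraction), which, as you rightly note, requires care for bridges and for disconnected plane graphs and is reasonably cited rather than reproved.
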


\begin{theorem}\label{GH}
Let $G\ast H$ be a union of two graphs $G$ and $H$ which have only a common vertex $v$, as shown in Fig. 1. Then
\begin{eqnarray}
T(G\ast H; x, y)=T(G; x,y)T(H; x,y).
\end{eqnarray}
\end{theorem}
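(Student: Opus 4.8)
The plan is to work directly from the subset (rank--nullity) expansion of the Tutte polynomial rather than from the deletion--contraction recursion, since the one-point union structure makes the closed formula factor almost immediately once a single combinatorial identity is in hand. Write $E_1=E(G)$ and $E_2=E(H)$; because $G$ and $H$ meet only in the vertex $v$, the edge set of $G\ast H$ is the disjoint union $E_1\sqcup E_2$, and every $A\subseteq E(G\ast H)$ splits uniquely as $A=A_1\cup A_2$ with $A_1=A\cap E_1$ and $A_2=A\cap E_2$. Likewise $V(G\ast H)=V(G)\cup V(H)$ with $V(G)\cap V(H)=\{v\}$, so that $|V(G\ast H)|=|V(G)|+|V(H)|-1$.

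The key step is to show that the rank function is additive across the two parts, i.e.
\[
r(A)=r_G(A_1)+r_H(A_2)
\]
for every such $A$, where $r_G,r_H$ are the rank functions of $G,H$. I would prove this through the component count $\omega$. Since $v$ is the only vertex shared by the two sides, no edge of $A_1$ is incident to a vertex of $V(H)\setminus\{v\}$ and no edge of $A_2$ is incident to a vertex of $V(G)\setminus\{v\}$; hence the only connected component of $(V(G\ast H),A)$ that can contain vertices from both sides is the one through $v$, which is exactly the union of the $v$-component in $(V(G),A_1)$ and the $v$-component in $(V(H),A_2)$. Counting the remaining components separately on each side then yields
\[
\omega(V(G\ast H),A)=\omega(V(G),A_1)+\omega(V(H),A_2)-1,
\]
and subtracting from $|V(G\ast H)|=|V(G)|+|V(H)|-1$ gives the claimed additivity of $r$. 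I expect this identity---in particular the careful bookkeeping of the shared vertex $v$ in the degenerate cases where $v$ is isolated on one or both sides---to be the main obstacle; everything afterwards is formal.

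Granting additivity, I would substitute into the closed formula. Applying the identity both to $A$ and to the full edge set gives $r(E)-r(A)=\bigl(r_G(E_1)-r_G(A_1)\bigr)+\bigl(r_H(E_2)-r_H(A_2)\bigr)$ and $|A|-r(A)=\bigl(|A_1|-r_G(A_1)\bigr)+\bigl(|A_2|-r_H(A_2)\bigr)$, so each summand of $T(G\ast H;x,y)$ factors as a product of a term depending only on $A_1$ and a term depending only on $A_2$. Since $A$ ranges over all subsets of $E(G\ast H)$ precisely as the pair $(A_1,A_2)$ ranges over $2^{E_1}\times 2^{E_2}$, the double sum splits as a product of two independent sums, which are by definition $T(G;x,y)$ and $T(H;x,y)$. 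This establishes the theorem.
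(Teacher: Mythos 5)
Your proof is correct. The rank additivity $r(A)=r_G(A_1)+r_H(A_2)$ is exactly right, your component count $\omega(V(G\ast H),A)=\omega(V(G),A_1)+\omega(V(H),A_2)-1$ holds (including the degenerate cases where $v$ is isolated on one or both sides, since the $v$-component is then just $\{v\}$ on that side), and once the rank splits, the subset expansion factors term by term because $A\mapsto(A_1,A_2)$ is a bijection between $2^{E_1\sqcup E_2}$ and $2^{E_1}\times 2^{E_2}$.

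Note, however, that the paper does not prove this statement at all: it records it as Theorem \ref{GH} and cites it as well known from textbooks such as Bollob\'as. So your argument is not a variant of the paper's proof but a self-contained replacement for the citation. Your route---additivity of the rank function across a cut vertex, which is really the statement that the cycle matroid of $G\ast H$ is the direct sum of the cycle matroids of $G$ and $H$---is the cleanest standard proof, and it has the advantage of being purely definitional, with no induction. The other textbook route is induction on $|E(H)|$ via the deletion--contraction recursion (if $e\in E(H)$ is neither a bridge nor a loop, then $(G\ast H)-e=G\ast(H-e)$ and $(G\ast H)/e=G\ast(H/e)$, with separate base cases when $H$ consists only of bridges and loops); that version fits the recursive definition used elsewhere in the paper but requires more case analysis. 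Either is acceptable; yours is complete as written.
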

\begin{center}
\includegraphics{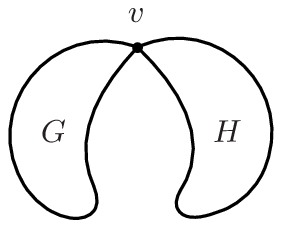}
\put(-75,-13){\mbox{\scriptsize\quad Fig. 1. $G\ast H$.}}
\end{center}

Let $S\subseteq V$, we use $G/S$ to denote the graph obtained from $G$ by identifying all the vertices in $S$ into a new vertex (all edges remain preserved). For convenience, in the following we sometimes abbreviate $T(G; x,y)$ into $T(G)$. The following lemma is contained as a special case in the general splitting-formula \cite{Neg} of the Tutte polynomial or 2-splitting formula for the Tutte polynomial of signed graphs \cite{Jin2}.
\begin{center}
\includegraphics{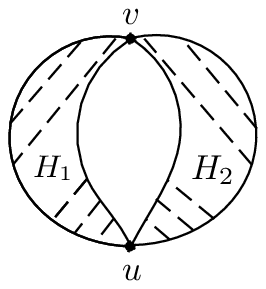}
\put(-67,-13){\mbox{\scriptsize\quad Fig. 2. $G$.}}
\end{center}
\begin{lemma}\label{H1H2}
Let $G$ be the union of two edge-disjoint connected graphs $H_1$ and $H_2$ having only two common vertices $v$ and $u$, as shown in Fig. 2. Let $H'_i=H_i/\{v,u\}$ for $i=1,2$. Then
\begin{align*}
T(G)=&\frac{1}{xy-x-y}[(y-1)T(H_1)T(H_2)+(x-1)T(H_1')T(H_2')\\
&-T(H_1)T(H_2')-T(H_1')T(H_2)].
\end{align*}
\end{lemma}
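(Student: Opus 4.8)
The plan is to work from the subset-expansion (rank--nullity) definition of the Tutte polynomial and to factor the contributions of $H_1$ and $H_2$ through the single vertex pair $\{u,v\}$. Writing $s=x-1$, $t=y-1$ and using $r(A)=|V(G)|-\omega(A)$, each summand $s^{r(E)-r(A)}t^{|A|-r(A)}$ rewrites as $s^{-1}t^{-|V(G)|}\,q^{\omega(A)}t^{|A|}$ with $q=st=(x-1)(y-1)$. Thus, up to the global prefactor $s\,t^{|V(G)|}$, computing $T(G)$ amounts to evaluating the partition-function-type sum $Z(H)=\sum_{A\subseteq E(H)}q^{\omega(A)}t^{|A|}$, in which the exponent of $t$ is additive over edges and the only interaction between the two pieces lives in $\omega(A)$. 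I would record once the three conversions $Z(H_i)=s\,t^{n_i}T(H_i)$, $Z(H_i')=s\,t^{\,n_i-1}T(H_i')$ and $Z(G)=s\,t^{\,n_1+n_2-2}T(G)$, where $n_i=|V(H_i)|$ and each graph is connected, so $\omega(E)=1$.

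Next I would split each edge set as $A=A_1\cup A_2$ with $A_i\subseteq E(H_i)$ and classify it according to whether $u$ and $v$ lie in the same component of $(V(H_i),A_i)$. The key combinatorial step is to count components after gluing along $\{u,v\}$: starting from the disjoint union and identifying $u$, then $v$, one checks that $\omega_G(A)=\omega(A_1)+\omega(A_2)-1$ when $u\sim v$ in both pieces and $\omega_G(A)=\omega(A_1)+\omega(A_2)-2$ otherwise. Introducing the restricted sums $Z_i^{=}$ and $Z_i^{\neq}$ over the configurations with $u\sim v$, respectively $u\not\sim v$, this rule makes $Z(G)$ split as
\begin{equation*}
Z(G)=q^{-1}Z_1^{=}Z_2^{=}+q^{-2}\bigl(Z(H_1)Z(H_2)-Z_1^{=}Z_2^{=}\bigr),
\end{equation*}
after collecting the three ``at least one separated'' terms into $Z(H_1)Z(H_2)-Z_1^{=}Z_2^{=}$ via $Z(H_i)=Z_i^{=}+Z_i^{\neq}$. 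Getting this factorization and the exponents of $q$ exactly right is the part I expect to be the main obstacle.

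Finally I would eliminate the restricted sums in favour of $H_i'=H_i/\{u,v\}$. Under the natural bijection between edge subsets of $H_i$ and of $H_i'$, identifying $u$ and $v$ leaves $\omega$ unchanged when $u\sim v$ and drops it by one when $u\not\sim v$; hence $Z(H_i')=Z_i^{=}+q^{-1}Z_i^{\neq}$ alongside $Z(H_i)=Z_i^{=}+Z_i^{\neq}$. Solving this $2\times2$ linear system gives $Z_i^{=}=\bigl(qZ(H_i')-Z(H_i)\bigr)/(q-1)$. Substituting into the displayed factorization, clearing denominators, and translating back through the three conversions collapses everything to
\begin{equation*}
(q-1)\,T(G)=t\,T(H_1)T(H_2)+s\,T(H_1')T(H_2')-T(H_1)T(H_2')-T(H_1')T(H_2),
\end{equation*}
and since $q-1=(x-1)(y-1)-1=xy-x-y$, $s=x-1$, $t=y-1$, this is exactly the claimed identity. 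The remainder is bookkeeping: the common prefactor $s\,t^{\,n_1+n_2-2}$ cancels to produce the clean shape, and one reads the statement as the polynomial identity obtained after clearing the factor $xy-x-y$, so no genuine division is performed.
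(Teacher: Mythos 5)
Your proof is correct, but it does not follow the paper's route, because the paper offers no proof of this lemma at all: it is simply cited as a special case of Negami's general splitting formula and of the 2-splitting formula for signed graphs of Jin and Zhang. Your argument is instead a self-contained derivation from the subset expansion: you rewrite $T$ as the random-cluster sum $Z(H)=\sum_{A\subseteq E(H)}q^{\omega(A)}t^{|A|}$ with $q=(x-1)(y-1)$, classify pairs $(A_1,A_2)$ according to whether $u$ and $v$ are joined in each side, and eliminate the restricted sums via the linear system $Z(H_i)=Z_i^{=}+Z_i^{\neq}$, $Z(H_i')=Z_i^{=}+q^{-1}Z_i^{\neq}$. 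The key steps all check out: the component counts $\omega_G(A)=\omega_1+\omega_2-1$ (when $u\sim v$ on both sides) and $\omega_1+\omega_2-2$ (otherwise) are right; so is $Z_i^{=}=\bigl(qZ(H_i')-Z(H_i)\bigr)/(q-1)$; and substituting and cancelling the common prefactor $s^2t^{\,n_1+n_2-1}$ indeed yields $(q-1)T(G)=t\,T(H_1)T(H_2)+s\,T(H_1')T(H_2')-T(H_1)T(H_2')-T(H_1')T(H_2)$, which is the lemma since $q-1=xy-x-y$. Two points deserve explicit mention in a final write-up: the conversions $Z(\cdot)=s\,t^{|V|}T(\cdot)$ require $\omega(E)=1$ for each of $G$, $H_i$, $H_i'$, which holds because contractions of connected graphs are connected; and the edge-subset bijection between $H_i$ and $H_i'$ relies on the paper's convention that $H_i/\{v,u\}$ preserves all edges (a $uv$-edge becomes a loop), which the subset expansion accommodates. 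What your approach buys is a fully elementary proof making the paper self-contained; what the paper's citation buys is brevity and the placement of the lemma inside a more general framework, since Negami's splitting theorem covers separations of arbitrary order rather than only two-vertex cuts.
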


As an application of Theorem \ref{GH} and Lemma \ref{H1H2}, we have the following result.
\begin{corollary}\label{mT}
Let $G$ be the union of two edge-disjoint connected graphs $H_1$ and $H_2$ such that $V(H_1)\cap V(H_2)={v}$, $V(H_1)\cup V(H_2)=V(G)$ and $E(H_1)\cup E(H_2) =E(G-e)$, where $e=u_1u_2$, $u_1\in V(H_1)$ and $u_2\in V(H_2)$, as shown in Fig. 3.  Let $H'_i=H_i/\{v,u_i\}$ for $i=1,2$. Then
\begin{align*}
T(G)=&\frac{1}{xy-x-y}[(xy-x-1)T(H_1)T(H_2)+(x-1)T(H'_1)T(H'_2)\\
&-T(H_1)T(H'_2)-T(H'_1)T(H_2)].
\end{align*}
\end{corollary}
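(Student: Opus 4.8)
The plan is to reduce Corollary~\ref{mT} to Lemma~\ref{H1H2} by relocating the extra edge $e=u_1u_2$ so that $G$ is exhibited as the union of two subgraphs meeting in exactly two vertices. Concretely, I would let $\widehat H_1=H_1\cup\{e\}$ be $H_1$ together with the edge $e$ regarded as a pendant edge attached at $u_1$ (so that $u_2$ becomes a pendant vertex of $\widehat H_1$), and set $\widehat H_2=H_2$. Then $G=\widehat H_1\cup\widehat H_2$, the two parts are edge-disjoint and connected, and their common vertex set is exactly $\{v,u_2\}$, since $V(H_1)\cap V(H_2)=\{v\}$ and $u_2\in V(H_2)$. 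Thus the hypotheses of Lemma~\ref{H1H2} hold with the two common vertices $v$ and $u:=u_2$, and applying it expresses $T(G)$ as a combination of $T(\widehat H_1),T(\widehat H_2),T(\widehat H_1'),T(\widehat H_2')$, where $\widehat H_i'=\widehat H_i/\{v,u_2\}$.

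The second step is to rewrite these four polynomials in terms of $T(H_1),T(H_2),T(H_1'),T(H_2')$. On the $H_2$ side this is immediate: $\widehat H_2=H_2$ and $\widehat H_2'=H_2/\{v,u_2\}=H_2'$. On the $H_1$ side, $\widehat H_1$ is the one-vertex union of $H_1$ and the single edge $e$ at $u_1$, so Theorem~\ref{GH} together with $T(\text{edge})=x$ yields $T(\widehat H_1)=x\,T(H_1)$ (equivalently, $e$ is a bridge of $\widehat H_1$). For $\widehat H_1'$, identifying $v$ with the pendant vertex $u_2$ turns the pendant edge $e$ into a chord $e'=vu_1$ joining two vertices of $H_1$; since $H_1$ is connected, $e'$ lies on a cycle and is neither a loop nor a bridge, so deletion-contraction on $e'$ gives $T(\widehat H_1')=T(H_1)+T(H_1/\{v,u_1\})=T(H_1)+T(H_1')$.

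Finally I would substitute $T(\widehat H_1)=x\,T(H_1)$, $T(\widehat H_1')=T(H_1)+T(H_1')$, $T(\widehat H_2)=T(H_2)$, $T(\widehat H_2')=T(H_2')$ into the expression from Lemma~\ref{H1H2} and collect the four product types $T(H_1)T(H_2)$, $T(H_1)T(H_2')$, $T(H_1')T(H_2)$, $T(H_1')T(H_2')$. The coefficient of $T(H_1)T(H_2)$ becomes $x(y-1)-1=xy-x-1$, that of $T(H_1)T(H_2')$ becomes $(x-1)-x=-1$, and the remaining two coefficients come out as $-1$ for $T(H_1')T(H_2)$ and $x-1$ for $T(H_1')T(H_2')$, reproducing the stated formula.

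The creative step here is the reduction itself, namely recognizing that the stray edge $e$ can be absorbed as a pendant edge so that Lemma~\ref{H1H2} applies; once this is set up, the rest is bookkeeping. The point demanding care is the evaluation of $T(\widehat H_1')$: one must track how the contraction $\{v,u_2\}$ acts on the pendant edge and confirm that the resulting $e'$ is genuinely non-bridge (which needs $u_1\neq v$ together with $H_1$ connected), so that the additive deletion-contraction rule, rather than the multiplicative bridge/loop rule, applies. The degenerate positions $u_1=v$ or $u_2=v$ would have to be checked separately, but in the intended fan-like applications $v,u_1,u_2$ are distinct.
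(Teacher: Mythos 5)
Your proof is correct, but it takes a different route from the paper's. The paper applies deletion--contraction to $e$ at the top level: $T(G)=T(G-e)+T(G/e)$, where $T(G-e)=T(H_1)T(H_2)$ by Theorem~\ref{GH} (one-point union at $v$) and $T(G/e)$ is given by Lemma~\ref{H1H2} (in $G/e$ the subgraphs $H_1,H_2$ share the two vertices $v$ and the merged $u_1=u_2$); adding the two contributions turns the coefficient $y-1$ into $xy-x-1$. You instead absorb $e$ into $H_1$ as a pendant edge, so that Lemma~\ref{H1H2} applies \emph{once} to $G$ itself with common vertices $\{v,u_2\}$, and you push the edge-handling into the evaluation of the four pieces: $T(\widehat H_1)=x\,T(H_1)$ by the bridge rule, and $T(\widehat H_1')=T(H_1)+T(H_1')$ by deletion--contraction on the relocated chord. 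In substance the two arguments perform the same two operations (splitting along a two-vertex cut, and deletion--contraction on the extra edge) in the opposite order, and your coefficient bookkeeping $x(y-1)-1=xy-x-1$, $(x-1)-x=-1$, $x-1$, $-1$ is exactly right. What the paper's order buys is brevity: $G-e$ and $G/e$ are precisely the graphs its two preliminary results handle, so no auxiliary graphs are introduced. What your order buys is a uniform template (a single application of the splitting lemma), at the cost of the one step genuinely needing care --- verifying that after contracting $\{v,u_2\}$ the pendant edge becomes a non-loop, non-bridge edge so the additive rule applies --- which you identify and justify correctly via connectedness of $H_1$ and $u_1\neq v$. Both proofs tacitly assume $v$, $u_1$, $u_2$ are distinct, as the figure indicates, so your closing caveat is prudent but does not mark a gap relative to the paper.
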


\begin{center}
\includegraphics{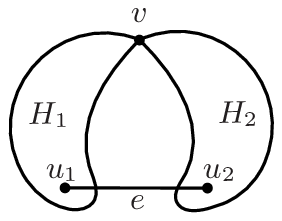}
\put(-67,-19){\mbox{\scriptsize\quad Fig. 3. $G$.}}
\end{center}
\begin{proof}
Applying the deletion-contraction formula to the edge $e$, we have $T(G)=T(G-e)+T(G/e)$.
By Theorem \ref{GH}, $T(G-e)=T(H_1)T(H_2)$. By Lemma \ref{H1H2}, we have 	
\begin{align*}
T(G/e)=&\frac{1}{xy-x-y}[(y-1)T(H_1)T(H_2)+(x-1)T(H'_1)T(H'_2)\\
&-T(H_1)T(H'_2)-T(H'_1)T(H_2)].
\end{align*}
Thus the Corollary holds.
\end{proof}

The following lemma is not difficult to prove and can be found in \cite{Li}.

\begin{lemma}\emph{\cite{Li}}\label{gflambda}
Let $f(x)$ be the generating function of the sequence $\{a_n\}^{\infty}_0$ and suppose that \[f(x)=\frac{1}{px^2-qx+1}.\] Then \[a_n=\frac{\lambda^{n+1}_1-\lambda^{n+1}_2}{\lambda_1-\lambda_2},\]
where \[\lambda_{1,2}=\frac{q\pm\sqrt{q^2-4p}}{2}\] are two roots of the equation
$\lambda^2-q\lambda+p=0$.
\end{lemma}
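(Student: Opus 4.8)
The plan is to prove this by factoring the denominator in terms of the roots $\lambda_1,\lambda_2$ and then extracting the coefficient of $x^n$ via partial fractions. The starting observation is that, since $\lambda_1$ and $\lambda_2$ are the roots of $\lambda^2-q\lambda+p=0$, Vieta's formulas give $\lambda_1+\lambda_2=q$ and $\lambda_1\lambda_2=p$. Substituting these into the expansion $(1-\lambda_1 x)(1-\lambda_2 x)=1-(\lambda_1+\lambda_2)x+\lambda_1\lambda_2 x^2$ shows that it equals $1-qx+px^2=px^2-qx+1$, so that $f(x)=\frac{1}{(1-\lambda_1 x)(1-\lambda_2 x)}$.

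Next I would decompose $f(x)$ into partial fractions. Assuming $\lambda_1\neq\lambda_2$, write $f(x)=\frac{A}{1-\lambda_1 x}+\frac{B}{1-\lambda_2 x}$ and solve for the constants by the cover-up method: clearing denominators and evaluating at $x=1/\lambda_1$ and at $x=1/\lambda_2$ respectively yields $A=\frac{\lambda_1}{\lambda_1-\lambda_2}$ and $B=-\frac{\lambda_2}{\lambda_1-\lambda_2}$. Expanding each term as a geometric series $\frac{1}{1-\lambda x}=\sum_{n\geq 0}\lambda^n x^n$ and collecting the coefficient of $x^n$ then gives $a_n=A\lambda_1^n+B\lambda_2^n=\frac{\lambda_1^{n+1}-\lambda_2^{n+1}}{\lambda_1-\lambda_2}$, which is the claimed formula.

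The only real obstacle is the degenerate case $\lambda_1=\lambda_2$ (equivalently $q^2=4p$), where both the partial fraction decomposition and the closed form take the indeterminate shape $0/0$. I would handle this either by treating the repeated-root case on its own, so that $f(x)=(1-\lambda_1 x)^{-2}$ gives $a_n=(n+1)\lambda_1^n$ directly from the binomial series, or by noting that the stated expression extends to this value by continuity, since $\lim_{\lambda_2\to\lambda_1}\frac{\lambda_1^{n+1}-\lambda_2^{n+1}}{\lambda_1-\lambda_2}=(n+1)\lambda_1^n$. As an alternative route that sidesteps partial fractions entirely, one could multiply out the identity $f(x)(px^2-qx+1)=1$ and compare coefficients of $x^n$, which produces the linear recurrence $a_n=qa_{n-1}-pa_{n-2}$ for $n\geq 2$ together with $a_0=1$ and $a_1=q$; solving this recurrence through its characteristic equation $\lambda^2-q\lambda+p=0$ recovers the same closed form.
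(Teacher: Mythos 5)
Your proof is correct, but note that the paper itself offers no proof of this lemma to compare against: it is quoted from \cite{Li} with the remark that it ``is not difficult to prove.'' Your argument --- factoring $px^2-qx+1=(1-\lambda_1x)(1-\lambda_2x)$ via Vieta's formulas, splitting into partial fractions with $A=\lambda_1/(\lambda_1-\lambda_2)$ and $B=-\lambda_2/(\lambda_1-\lambda_2)$, and extracting coefficients from the geometric series --- is the standard derivation and is carried out correctly, as is the alternative you sketch via the recurrence $a_n=qa_{n-1}-pa_{n-2}$ with $a_0=1$, $a_1=q$. The natural in-paper comparison is with the proof of the companion Lemma \ref{gf}, which computes the same coefficients by a different device: expanding $\frac{1}{1-(qx-px^2)}=\sum_{i\geq 0}(q-px)^i x^i$ and reindexing, which yields the alternating binomial-sum form without ever introducing the roots $\lambda_{1,2}$. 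The two techniques are complementary --- partial fractions give the Binet-type closed form of Lemma \ref{gflambda}, direct expansion gives the combinatorial form of Lemma \ref{gf} --- and the paper uses both forms downstream (e.g.\ Equations (\ref{F1}) and (\ref{F2})). One genuine improvement in your write-up is that you treat the repeated-root case $q^2=4p$, where the stated formula is a $0/0$ expression; handling it by $(1-\lambda_1x)^{-2}=\sum_{n\geq 0}(n+1)\lambda_1^n x^n$, or by continuity, makes the lemma unconditionally valid, a point that both the paper and \cite{Li} pass over silently (harmlessly, since in the paper's applications $p$ and $q$ are polynomials in $x,y$ and only the generic case is invoked).
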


The $a_n$ also has the following combinatorial expression.

\begin{lemma}\label{gf}
Let $f(x)$ be the generating function of the sequence $\{a_n\}^{\infty}_0$ and suppose that $$f(x)=\frac{1}{px^2-qx+1}.$$ Then $$a_n=\sum_{j=0}^{\lfloor \frac{n}{2}\rfloor}
(-1)^j\binom{n-j}{j}q^{n-2j}p^{j}x^{n}.$$
\end{lemma}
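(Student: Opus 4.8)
The plan is to expand $f(x)$ directly as a formal power series and read off the coefficient of $x^n$. Writing the denominator as $1-(qx-px^2)$, I would invoke the geometric series identity
\[
f(x)=\frac{1}{1-(qx-px^2)}=\sum_{k=0}^{\infty}(qx-px^2)^k=\sum_{k=0}^{\infty}x^k(q-px)^k,
\]
which is legitimate as an identity of formal power series because $qx-px^2$ has zero constant term. Then I would expand each factor by the binomial theorem,
\[
(q-px)^k=\sum_{i=0}^{k}\binom{k}{i}q^{k-i}(-p)^i x^i,
\]
so that the contribution of the $k$-th summand to $f(x)$ becomes $\sum_{i=0}^{k}(-1)^i\binom{k}{i}q^{k-i}p^i\,x^{k+i}$.

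Next I would collect the total coefficient of $x^n$ over all $k$. The monomial $x^{k+i}$ equals $x^n$ precisely when $k=n-i$, and the binomial-theorem constraint $0\le i\le k$ then reads $0\le i\le n-i$, i.e. $0\le i\le\lfloor n/2\rfloor$. Substituting $k=n-i$ gives
\[
a_n=\sum_{i=0}^{\lfloor n/2\rfloor}(-1)^i\binom{n-i}{i}q^{(n-i)-i}p^i=\sum_{i=0}^{\lfloor n/2\rfloor}(-1)^i\binom{n-i}{i}q^{n-2i}p^i,
\]
which is exactly the asserted expression (after renaming the index $i$ to $j$).

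The only delicate point is the index bookkeeping in the resulting double sum: I must check that the range $0\le i\le\lfloor n/2\rfloor$ captures precisely the pairs $(k,i)$ with $k\ge 0$, $0\le i\le k$ and $k+i=n$, with no omission or double counting — the rest is routine application of the binomial theorem. An alternative route would be to derive the three-term recurrence $a_n=qa_{n-1}-pa_{n-2}$ for $n\ge 2$, together with $a_0=1$ and $a_1=q$, from the relation $(px^2-qx+1)f(x)=1$, and then confirm the closed form by induction using the Pascal identity $\binom{n-j}{j}=\binom{n-1-j}{j}+\binom{n-1-j}{j-1}$; in that approach the main obstacle is instead controlling the boundary terms of the sum when $\lfloor n/2\rfloor$ increments, so I prefer the direct expansion above.
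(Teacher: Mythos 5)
Your proposal is correct and follows essentially the same route as the paper's own proof: expand $1/(1-(qx-px^2))$ as a geometric series, apply the binomial theorem to $(q-px)^k$, and collect the coefficient of $x^n$ via the substitution $k=n-i$ with the constraint $0\le i\le\lfloor n/2\rfloor$. The only differences are notational (your indices $k,i$ versus the paper's $i,j$) and that the paper performs the bookkeeping by explicitly interchanging the order of summation, which is the same accounting you describe.
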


\begin{proof}
\begin{align*}
\frac{1}{px^2-qx+1}&=\sum_{i=0}^{\infty}(q-px)^ix^i=\sum_{i=0}^{\infty}\sum_{j=0}^{i}(-1)^j
\binom{i}{j}q^{i-j}p^{j}x^{i+j}\\
&=\sum_{j=0}^{\infty}\sum_{i\geq j}(-1)^j
\binom{i}{j}q^{i-j}p^{j}x^{i+j}\\
&\xlongequal{i=n-j}
\sum_{j=0}^{\infty}\sum_{n\geq 2j}(-1)^j\binom{n-j}{j}q^{n-2j}p^{j}x^{n}\\
&=\sum_{n=0}^{\infty}\sum_{j=0}^{\lfloor \frac{n}{2}\rfloor}
(-1)^j\binom{n-j}{j}q^{n-2j}p^{j}x^{n}.\\
\end{align*}
Thus \[a_n=\sum_{j=0}^{\lfloor \frac{n}{2}\rfloor}
(-1)^j\binom{n-j}{j}q^{n-2j}p^{j}.\qedhere \]
\end{proof}

\section{Tutte polynomials of fan-like graphs}

We first recall the fan graph and the wheel graph. The join graph of two vertex disjoint graphs $G$
and $H$, denoted by $G\vee H$, is the graph with the vertex set $V(G)\cup V(H)$
and the edge set $E(G)\cup E(H)\cup \{uv\,|\, u\in V(G), v\in V(H)\}$. An $n$-\emph{fan}, denoted by $F_n$, is defined as $F_n=K_1\vee P_n$, where $K_1$ is the trivial graph, i.e. has one vertex and no edges, and $P_n$ is the path on $n$ vertices. An $n$-\emph{wheel}, denoted by $W_n$, is defined as $W_n=K_1\vee C_n$, where $C_n$ is the cycle on $n$ vertices.

Tutte polynomials of the fan and wheel graphs were easily obtained by establishing recursive relations using deletion-contraction formula. See \cite{Biggs} for the recursion of the wheel graphs. In this section we shall try to generalize them to the fan-like and wheel-like graphs.

\subsection{The first kind of fan-like graphs}

In this subsection, we shall define the first kind of fan-like graphs and study their Tutte polynomials and Tutte polynomials of related graphs including the wheel-like graphs.

Let $G$ be a connected graph with at least two distinct vertices $v$ and $u$, and $H_i$ be a copy of $G$ for $i=1,2,\ldots,n$. Then the fan-like graph $\mathcal{F}_{n}$ shown as Fig. 4 is defined to be the graph formed by identifying each vertex $v$ of $H_i$'s into a new vertex and connecting the vertex $u$ of $H_i$ to the vertex $u$ of $H_{i+1}$ by an edge $e_{i}$ for $i=1,2,\ldots,n-1$. $\mathcal{F}^{+}_n$ is defined to be the graph obtained from $\mathcal{F}_n$ by adding an edge connecting $v$ and $u$ of $H_1$, and $\mathcal{F}^{++}_n$ is defined to be the graph obtained from $\mathcal{F}^{+}_n$ by adding another edge connecting $v$ and $u$ of $H_n$, see Fig. 5. In particular, $\mathcal{F}_1^{+}=G^+$ and $\mathcal{F}_1^{++}=G^{++}$.
\begin{center}
\includegraphics{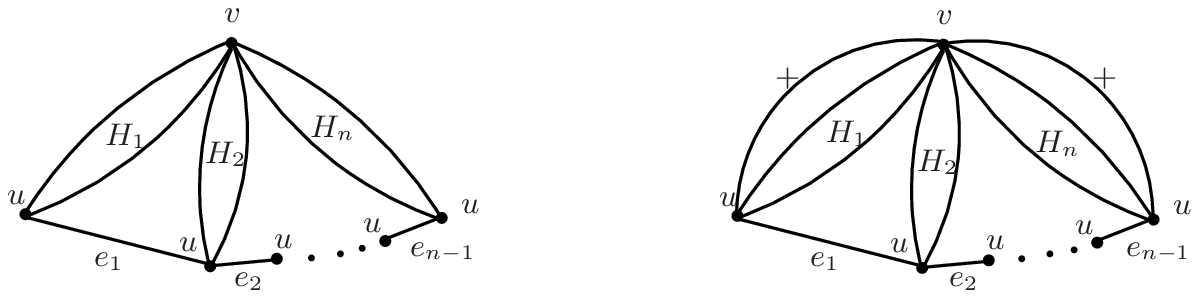}
	\put(-345,-10){\mbox{\scriptsize\quad Fig. 4. The fan-like graph $\mathcal{F}_{n}$.}}
	\put(-111,-10){\mbox{\scriptsize\quad Fig. 5. $\mathcal{F}^{++}_n$.}}
\end{center}

Now we present the first main result of this paper as follows. For convenience, we write a bivariate function $A(x,y)$ as $A$. In this subsection, we always assume that

\begin{align*}
	A=&\frac{x}{xy-x-y}((y-1)T(G)-T(G/\{v, u\})),\\
B=&\frac{1}{xy-x-y}((x-1)T(G/\{v, u\})-T(G)),\\
C=&\frac{1}{xy-x-y}((xy-y-1)T(G/\{v, u\})-T(G)),
	\end{align*}
	and $$\lambda_{1,2}=\frac{A+C\pm\sqrt{(A-C)^2+4AB}}{2}.$$

\begin{theorem}\label{GFn}
	Let $n\geq 1$ be an integer.
	Then
\begin{align}\label{F1}	
T(\mathcal{F}_{n})=T(G)\frac{\lambda^n_1-\lambda^n_2}{\lambda_1-\lambda_2}+((B-C)T(G)+BT(G/\{v,u\}))
	\frac{\lambda^{n-1}_1-\lambda^{n-1}_2}{\lambda_1-\lambda_2},
\end{align}
	or
	\begin{align}\label{F2}	
	T(\mathcal{F}_n)=&((B-C)T(G)+BT(G/\{v,u\}))\sum_{j=0}^{\lfloor \frac{n-2}{2}\rfloor}
	\binom{n-j-2}{j}(A+C)^{n-2j-2}\cdot \nonumber\\
	&(AB-AC)^{j}+T(G)\sum_{j=0}^{\lfloor \frac{n-1}{2}\rfloor}
	\binom{n-j-1}{j}(A+C)^{n-2j-1}(AB-AC)^{j}.
	\end{align}
	\end{theorem}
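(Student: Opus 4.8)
The plan is to set up a coupled linear recursion for a pair of Tutte polynomials and then solve it. Besides $a_n:=T(\mathcal{F}_n)$, I would introduce the auxiliary sequence $c_n:=T(\mathcal{F}_n)+T(\mathcal{F}_n/\{v,u_n\})$, where $u_n$ denotes the copy of $u$ in the last block $H_n$; by deletion--contraction on an edge joining the hub $v$ to $u_n$, this $c_n$ is just the Tutte polynomial of $\mathcal{F}_n$ with one extra edge $vu_n$ added. The key structural observation is that $\mathcal{F}_n$ is obtained from $\mathcal{F}_{n-1}$ and a fresh copy $H_n\cong G$ that meet only in the hub $v$, joined by the single edge $e_{n-1}=u_{n-1}u_n$ --- exactly the configuration of Corollary \ref{mT} with $H_1=\mathcal{F}_{n-1}$, $H_2=H_n$, and $e=e_{n-1}$.

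First I would apply Corollary \ref{mT} to express $a_n$ through $a_{n-1}$ and $b_{n-1}:=T(\mathcal{F}_{n-1}/\{v,u_{n-1}\})$, obtaining $a_n=\alpha\,a_{n-1}+B\,b_{n-1}$ with $\alpha$ the coefficient produced by the corollary. Next, to handle $\mathcal{F}_n/\{v,u_n\}$ I would contract $v$ with $u_n$: inside $H_n$ this turns the block into $G/\{v,u\}$, which then hangs off the hub sharing only $v$, while $e_{n-1}$ becomes an edge from $u_{n-1}$ to $v$. Theorem \ref{GH} splits off the factor $T(G/\{v,u\})$, and deletion--contraction on the relocated $e_{n-1}$ gives $T(\mathcal{F}_n/\{v,u_n\})=T(G/\{v,u\})(a_{n-1}+b_{n-1})=T(G/\{v,u\})\,c_{n-1}$. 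Using $b_{n-1}=c_{n-1}-a_{n-1}$ and the algebraic identities $\alpha=A+B$, $\,C-B=T(G/\{v,u\})$, and $AC-AB=A\,T(G/\{v,u\})$ (routine but essential, as they are precisely what collapses the transfer matrix into the form $\left(\begin{smallmatrix}A&B\\ A&C\end{smallmatrix}\right)$), the bookkeeping yields the clean system
\begin{align*}
a_n&=A\,a_{n-1}+B\,c_{n-1},\\
c_n&=A\,a_{n-1}+C\,c_{n-1}.
\end{align*}

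The transfer matrix $\left(\begin{smallmatrix}A&B\\ A&C\end{smallmatrix}\right)$ has characteristic polynomial $\lambda^2-(A+C)\lambda+(AC-AB)$, whose roots are exactly the $\lambda_{1,2}$ defined above, so $a_n$ obeys the scalar recursion $a_n=(A+C)a_{n-1}-(AC-AB)a_{n-2}$. With the initial data $a_1=T(G)$ and $a_2=(A+B)T(G)+B\,T(G/\{v,u\})$ (the latter from the $n=2$ case), I would write the solution as $a_n=T(G)s_n+\bigl((B-C)T(G)+B\,T(G/\{v,u\})\bigr)s_{n-1}$, where $s_n:=(\lambda_1^n-\lambda_2^n)/(\lambda_1-\lambda_2)$, since $s_n$ and $s_{n-1}$ span the solution space and the two constants are fixed by $n=1,2$; this is \eqref{F1}. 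To obtain \eqref{F2} I would expand $s_n$ and $s_{n-1}$ via Lemma \ref{gf} with $q=A+C$ and $p=AC-AB$ (noting $s_n$ is the shifted sequence of Lemmas \ref{gflambda}--\ref{gf}), rewriting $(-1)^jp^j=(AB-AC)^j$.

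The hard part will be the second step: choosing the right second coordinate. Tracking $T(\mathcal{F}_n)$ together with $T(\mathcal{F}_n/\{v,u_n\})$ alone produces a transfer matrix that does not visibly match $A,B,C$; only after passing to the combination $c_n=T(\mathcal{F}_n)+T(\mathcal{F}_n/\{v,u_n\})$ does the system reduce to $\left(\begin{smallmatrix}A&B\\ A&C\end{smallmatrix}\right)$, and confirming this hinges on the three identities above. Everything after the recursion is standard linear-algebra and generating-function manipulation; the division by $xy-x-y$ and the degenerate case $\lambda_1=\lambda_2$ can be treated formally, and in fact the combinatorial form \eqref{F2} is valid without the hypothesis $\lambda_1\neq\lambda_2$.
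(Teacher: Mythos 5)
Your proposal is correct, and it lands on exactly the paper's coupled system \eqref{F3}: your $c_n$ is the paper's $T(\mathcal{F}^{+}_n)$ (the extra edge sits at $H_n$ rather than $H_1$, but these placements give isomorphic graphs), and I checked the identities you rely on ($\alpha=A+B$, $C-B=T(G/\{v,u\})$), the factorization $T(\mathcal{F}_n/\{v,u_n\})=T(G/\{v,u\})\,c_{n-1}$, and the initial values $a_1,a_2$ --- all hold. The difference lies in how the system is obtained and solved. The paper peels off the \emph{first} block and applies Lemma \ref{H1H2} twice, once to $\mathcal{F}_n$ and once to $\mathcal{F}^{+}_n$, each split into $H_1$ (resp.\ $H_1$ plus the extra edge) and the rest, with the connecting edge $e_1$ absorbed into the second piece as a pendant edge (whence the factors $xT(\mathcal{F}_{n-1})$ and $T(\mathcal{F}^{+}_{n-1})$); both rows of the transfer matrix $\left(\begin{smallmatrix}A&B\\ A&C\end{smallmatrix}\right)$ then drop out of the same lemma with no extra identities. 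You instead peel off the \emph{last} block via Corollary \ref{mT}, and get the second row not from a splitting lemma but from the elementary factorization of the contracted graph (Theorem \ref{GH} plus deletion--contraction); the price is the change of coordinates to $c_n$ and the three identities needed to collapse the matrix --- incidentally, your route is close in spirit to the paper's own proof of Theorem \ref{Gn} for the second kind of fan-like graphs, which does track the pair $(T(\mathcal{G}_n),T(\mathcal{G}_n/\{v,u\}))$ via Corollary \ref{mT}. Finally, the paper solves the system by writing down the generating function $F(z)$ explicitly and invoking Lemmas \ref{gflambda} and \ref{gf}, whereas you solve the scalar recursion $a_n=(A+C)a_{n-1}-(AC-AB)a_{n-2}$ by characteristic roots and initial conditions; this is equivalent, and your closing remark that the combinatorial form \eqref{F2} needs no hypothesis $\lambda_1\neq\lambda_2$ makes explicit a point the paper leaves implicit.
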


\begin{proof}
Let $R_1$ and $R_2$ be subgraphs of $\mathcal{F}_n$ induced by the sets $E(H_1)$ and $(\bigcup_{2\leq i\leq n} E(H_{i}))\cup \{e_1\}$, respectively. Clearly, $T(R_1)=T(G)$, $T(R_1/\{v, u\})=T(G/\{v, u\})$, $T(R_2)=xT(\mathcal{F}_{n-1})$ and $T(R_2/\{v, u\})=T(\mathcal{F}^{+}_{n-1})$, where $u\in V(H_1)$.
Let $S_1$ and $S_2$ be subgraphs of $\mathcal{F}^{+}_n$ induced by the sets $E(H_1)\cup\{+\}$ and $(\bigcup_{2\leq i\leq n} E(H_{i}))\cup \{e_1\}$, respectively. Clearly, $T(S_1)=T(G)+T(G/\{v, u\})$, $T(S_1/\{v, u\})=yT(G/\{v, u\})$, $T(S_2)=xT(\mathcal{F}_{n-1})$ and $T(S_2/\{v, u\})=T(\mathcal{F}^{+}_{n-1})$, where $u\in V(H_1)$.
By Lemma \ref{H1H2}, we have
	\begin{eqnarray}\label{F3}
	\left\{\begin {array}{l}
	T(\mathcal{F}_{n})=AT(\mathcal{F}_{n-1})+BT(\mathcal{F}_{n-1}^{+}), \\
	T(\mathcal{F}_{n}^{+})=AT(\mathcal{F}_{n-1})+CT(\mathcal{F}_{n-1}^{+}).
	\end{array}
	\right.
	\end{eqnarray}
	
	Let $$F(z)=\sum_{n\geq 1}T(\mathcal{F}_{n})z^n \text{ and } G(z)=\sum_{n\geq 1}T(\mathcal{F}_{n}^{+})z^n.$$
	By Equation (\ref{F3}), we have
	$$F(z)=\frac{((B-C)T(G)+BT(G/\{v,u\}))z^2+T(G)z}{A(C-B)z^2-(A+C)z+1}.$$
	Thus it is clear from Lemma \ref{gflambda} to obtain Equation (\ref{F1}). Moreover, by Lemma \ref{gf} we have
	\begin{align*}
	T(\mathcal{F}_n)=&((B-C)T(G)+BT(G/\{v,u\}))\sum_{j=0}^{\lfloor \frac{n-2}{2}\rfloor}
	\binom{n-j-2}{j}(A+C)^{n-2j-2}\cdot\\
	&(AB-AC)^{j}+T(G)\sum_{j=0}^{\lfloor \frac{n-1}{2}\rfloor}
	\binom{n-j-1}{j}(A+C)^{n-2j-1}(AB-AC)^{j}.\qedhere
	\end{align*}
\end{proof}

Note that $\mathcal{F}_{n}$ is reduced to the fan graph $F_n$ if $G=K_2$. As a corollary, we have

\begin{corollary}\label{K3}
Let $n\geq 1$ be an integer. Then
\begin{align*}	
T(F_{n})=x\frac{\lambda^n_1-\lambda^n_2}{\lambda_1-\lambda_2}+y(1-x)
	\frac{\lambda^{n-1}_1-\lambda^{n-1}_2}{\lambda_1-\lambda_2},
\end{align*}
or
	\begin{align*}
	T(F_n)=&y(1-x)\sum_{j=0}^{\lfloor \frac{n-2}{2}\rfloor}
	\binom{n-j-2}{j}(x+y+1)^{n-2j-2}(-xy)^{j}\\
	&+x\sum_{j=0}^{\lfloor \frac{n-1}{2}\rfloor}
	\binom{n-j-1}{j}(x+y+1)^{n-2j-1}(-xy)^{j},
	\end{align*}
where $$\lambda_{1,2}=\frac{x+y+1\pm\sqrt{4x+(y-x+1)^2}}{2}.$$
\end{corollary}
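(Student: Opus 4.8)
The plan is to derive this corollary as a direct specialization of Theorem \ref{GFn} to the case $G=K_2$, in which the fan-like graph $\mathcal{F}_n$ coincides with the ordinary fan $F_n$, as already noted just before the statement. Since every quantity in Theorem \ref{GFn} is built from the two base values $T(G)$ and $T(G/\{v,u\})$ (via the auxiliary functions $A$, $B$, $C$), the whole task reduces to evaluating these two Tutte polynomials for $G=K_2$ and then substituting.

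First I would compute the two base Tutte polynomials. The graph $K_2$ is a single edge joining $v$ and $u$; this edge is a bridge, so $T(K_2)=x$. For $G/\{v,u\}$, recall that the contraction $G/S$ keeps all edges, so identifying the two endpoints of the single edge of $K_2$ turns it into a loop; hence $T(K_2/\{v,u\})=y$. Getting this second evaluation right — a loop contributing $y$ rather than an isolated vertex contributing $1$ — is the one place where care is needed, and it is really the only genuine subtlety in the argument.

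Next I would substitute $T(G)=x$ and $T(G/\{v,u\})=y$ into the definitions of $A$, $B$, $C$. The key algebraic observation is that the common denominator $xy-x-y$ cancels after expansion: one checks $(y-1)x-y=xy-x-y$, so $A=x$; likewise $(x-1)y-x=xy-x-y$, so $B=1$; and $(xy-y-1)y-x=(y+1)(xy-x-y)$, so $C=y+1$. From these I would record the derived quantities that Theorem \ref{GFn} requires: $A+C=x+y+1$, then $(A-C)^2+4AB=(y-x+1)^2+4x$ (which yields the stated $\lambda_{1,2}$), $AB-AC=A(B-C)=-xy$, and $(B-C)T(G)+B\,T(G/\{v,u\})=-xy+y=y(1-x)$.

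Finally, plugging these values into formulas (\ref{F1}) and (\ref{F2}) of Theorem \ref{GFn} reproduces the two displayed expressions for $T(F_n)$ verbatim. Since Theorem \ref{GFn} is assumed, no new structural work is required; the argument is entirely a verification that the bookkeeping quantities collapse to these simple polynomials, and I expect the only real obstacle to be the loop-versus-isolated-vertex evaluation flagged above.
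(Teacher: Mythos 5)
Your proposal is correct and takes exactly the route the paper intends: the paper states Corollary \ref{K3} without proof as the immediate $G=K_2$ specialization of Theorem \ref{GFn}, and your computations — $T(K_2)=x$, $T(K_2/\{v,u\})=y$ (a loop, since contraction preserves the edge), whence $A=x$, $B=1$, $C=y+1$, $A+C=x+y+1$, $AB-AC=-xy$, $(B-C)T(G)+BT(G/\{v,u\})=y(1-x)$, and $(A-C)^2+4AB=(y-x+1)^2+4x$ — correctly supply the bookkeeping the paper leaves implicit.
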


We now consider the Tutte polynomial of $\mathcal{F}^{++}_n$ by using similar method, which will be used in the Section 4.
\begin{theorem}\label{GFn+}
	Let $n\geq 1$ be an integer.
	Then
	\begin{align}\label{F+1}
    T(\mathcal{F}^{++}_n)=T(G^{++})\frac{\lambda^n_1-\lambda^n_2}{\lambda_1-\lambda_2}-yAT(G/\{v, u\})\frac{\lambda^{n-1}_1-\lambda^{n-1}_2}{\lambda_1-\lambda_2},
    \end{align}
	or
	\begin{align}\label{F+2}
		T(\mathcal{F}^{++}_n)=&T(G^{++})\sum_{j=0}^{\lfloor \frac{n-1}{2}\rfloor}
	\binom{n-j-1}{j}(A+C)^{n-2j-1}(AB-AC)^{j}\nonumber \\
	&-yAT(G/\{v, u\})\sum_{j=0}^{\lfloor \frac{n-2}{2}\rfloor}
	\binom{n-j-2}{j}(A+C)^{n-2j-2}A^{j}(B-C)^{j}.
	\end{align}
	\end{theorem}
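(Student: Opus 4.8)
The plan is to mirror the structure of the proof of Theorem~\ref{GFn}, setting up a coupled linear recursion between $T(\mathcal{F}^{++}_n)$ and an auxiliary family, then translating it into generating functions. First I would decompose $\mathcal{F}^{++}_n$ by isolating the last copy $H_n$ together with the extra edge connecting $v$ and $u$ of $H_n$. The natural auxiliary quantity is $T(\mathcal{F}^{+}_n)$, since peeling off the rightmost block of $\mathcal{F}^{++}_n$ should leave behind a structure that is $\mathcal{F}^{+}_{n-1}$-like on one side. Concretely, I expect to apply Lemma~\ref{H1H2} with $H_1$ the subgraph carrying the first $n-1$ copies (plus the initial $+$ edge) and $H_2$ the terminal copy $H_n$ with its added edge, computing the four ingredients $T(H_i)$ and $T(H_i')$ in terms of $T(G^{+})$, $T(G^{++})$, $T(G/\{v,u\})$, and the previously established polynomials $T(\mathcal{F}^{+}_{n-1})$ and $T(\mathcal{F}^{++}_{n-1})$.

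The key computational step is to show that this decomposition yields a recursion of the form
\begin{align*}
T(\mathcal{F}^{++}_n)=A\,T(\mathcal{F}^{+}_{n-1})+C\,T(\mathcal{F}^{++}_{n-1}),
\end{align*}
using the same coefficients $A$ and $C$ that appeared in Equation~(\ref{F3}). This is plausible because the terminal-block decomposition of $\mathcal{F}^{++}_n$ is structurally identical to that of $\mathcal{F}^{+}_n$ from Theorem~\ref{GFn}, the only difference being the boundary condition contributed by the final added edge. I would then combine this with the already-known generating function for $G(z)=\sum_{n\geq1}T(\mathcal{F}^{+}_n)z^n$ from the proof of Theorem~\ref{GFn}.

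Next I would form the generating function $\Phi(z)=\sum_{n\geq1}T(\mathcal{F}^{++}_n)z^n$. Substituting the recursion and the known closed form for $G(z)$, I expect $\Phi(z)$ to come out with the same denominator $A(C-B)z^2-(A+C)z+1$ as $F(z)$ and $G(z)$, which is exactly what is needed so that Lemma~\ref{gflambda} produces the same $\lambda_{1,2}$ and hence the factor $(\lambda_1^n-\lambda_2^n)/(\lambda_1-\lambda_2)$ in Equation~(\ref{F+1}). The numerator must simplify to a degree-$\le 2$ polynomial in $z$ whose coefficients rearrange, after identifying $T(\mathcal{F}^{++}_1)=T(G^{++})$ and using the base cases, into the two coefficients $T(G^{++})$ and $-yA\,T(G/\{v,u\})$ of Equation~(\ref{F+1}). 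Once Equation~(\ref{F+1}) is in hand, Equation~(\ref{F+2}) follows immediately by invoking Lemma~\ref{gf} in place of Lemma~\ref{gflambda}, exactly as in the previous theorem.

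The main obstacle I anticipate is the bookkeeping in verifying the numerator simplification: I must correctly evaluate the four Tutte-polynomial contractions for the terminal block (especially $T(H_n^{++})$ versus $T(H_n/\{v,u\})$ after the extra edge is added, where the added edge interacts with the identification $\{v,u\}$ and may create a loop or parallel edge contributing factors of $y$), and confirm that the cross terms collapse to give precisely the coefficient $-yA\,T(G/\{v,u\})$ rather than some more complicated expression. Getting the $y$-factor and the sign right in this coefficient is the delicate point, and I would check it against the base case $n=1$ and ideally the special case $G=K_2$ (so that $\mathcal{F}^{++}_n$ reduces to a wheel-like graph) to guard against algebraic slips.
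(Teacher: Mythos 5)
Your proposal is correct, and its core coincides with the paper's proof: the paper also peels off the terminal copy $H_n$ together with its extra edge, applies Lemma \ref{H1H2} with $T(S_1)=xT(\mathcal{F}^{+}_{n-1})$, $T(S_1/\{v,u\})=T(\mathcal{F}^{++}_{n-1})$, $T(S_2)=T(G)+T(G/\{v,u\})$ and $T(S_2/\{v,u\})=yT(G/\{v,u\})$ (exactly the loop factor $y$ you flagged as delicate), obtains precisely your recursion $T(\mathcal{F}^{++}_n)=AT(\mathcal{F}^{+}_{n-1})+CT(\mathcal{F}^{++}_{n-1})$, and finishes with Lemmas \ref{gflambda} and \ref{gf}.

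The one point where you genuinely deviate is how the auxiliary sequence $T(\mathcal{F}^{+}_n)$ is closed off. The paper does not import anything from Theorem \ref{GFn}: it applies the same terminal-block decomposition to $\mathcal{F}^{+}_n$ itself (where $H_n$ carries no extra edge), yielding the companion recursion $T(\mathcal{F}^{+}_n)=AT(\mathcal{F}^{+}_{n-1})+BT(\mathcal{F}^{++}_{n-1})$. The resulting self-contained $2\times 2$ system (\ref{F+3}) has the same coefficient matrix as (\ref{F3}), so elimination immediately gives
\begin{equation*}
G(z)=\frac{T(G^{++})z-yAT(G/\{v,u\})z^{2}}{A(C-B)z^{2}-(A+C)z+1},
\end{equation*}
with the quadratic denominator that Lemma \ref{gflambda} requires. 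Your route instead reuses $\sum_{n\geq 1}T(\mathcal{F}^{+}_{n})z^{n}$ from the system (\ref{F3}); note the paper never writes that closed form, so you must first derive it, namely $\bigl(T(G^{+})z-AT(G/\{v,u\})z^{2}\bigr)/\bigl(A(C-B)z^{2}-(A+C)z+1\bigr)$. Substituting it into your single recursion gives $\Phi(z)$ with the a priori cubic denominator $(1-Cz)\bigl(A(C-B)z^{2}-(A+C)z+1\bigr)$, and you must show the factor $1-Cz$ cancels against the numerator. It does, but the verification is exactly the bookkeeping you anticipated: it reduces to the two identities $C-B=T(G/\{v,u\})$ and $A+yC=T(G)+(y+1)T(G/\{v,u\})=T(G^{++})$, both checkable by expanding the definitions of $A$, $B$, $C$. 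So your argument is sound as planned, but the paper's symmetric system sidesteps the cancellation entirely; adopting the companion recursion for $\mathcal{F}^{+}_n$ would make your proof identical to the paper's and shorter.
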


\begin{proof}
Let $R_1$ and $R_2$ be subgraphs of $\mathcal{F}^{+}_n$ induced by the sets $(\bigcup_{1\leq i\leq n-1} E(H_{i}))\cup \{e_{n-1,+}\}$ and $E(H_n)$, respectively.
Clearly, $T(R_1)=xT(\mathcal{F}^{+}_{n-1})$, $T(R_1/\{v, u\})=T(\mathcal{F}^{++}_{n-1})$, $T(R_2)=T(G)$ and $T(R_2/\{v, u\})=T(G/\{v, u\})$, where $u\in V(H_n)$.
Let $S_1$ and $S_2$ be subgraphs of $\mathcal{F}^{++}_n$ induced by the sets $(\bigcup_{1\leq i\leq n-1} E(H_{i}))\cup \{e_{n-1}\}\cup\{+\}$ (the left one) and $E(H_n)\cup \{+\}$ (the right one), respectively.
Clearly, $T(S_1)=xT(\mathcal{F}^{+}_{n-1})$, $T(S_1/\{v, u\})=T(\mathcal{F}^{++}_{n-1})$, $T(S_2)= T(G)+T(G/\{v, u\})$ and $T(S_2/\{v, u\})=yT(G/\{v, u\})$, where $u\in V(H_n)$.
By Lemma \ref{H1H2}, we have
	\begin{eqnarray}\label{F+3}
	\left\{\begin {array}{l}
	T(\mathcal{F}^{+}_n)=AT(\mathcal{F}^{+}_{n-1})+BT(\mathcal{F}^{++}_{n-1}), \\
	T(\mathcal{F}^{++}_n)=AT(\mathcal{F}^{+}_{n-1})+CT(\mathcal{F}^{++}_{n-1}).
	\end{array}
	\right.
	\end{eqnarray}
	Let $$F(z)=\sum_{n\geq 1}T(\mathcal{F}^{+}_{n})z^n \text{ and } G(z)=\sum_{n\geq 1}T(\mathcal{F}_{n}^{++})z^n.$$
	By Equation (\ref{F+3}), we have
	$$G(z)=\frac{-yAT(G/\{v, u\})z^2+T(G^{++})z}{A(C-B)z^2-(A+C)z+1}.$$
	Thus it is clear from Lemma \ref{gflambda} to obtain the Equation (\ref{F+1}). Moreover, by Lemma \ref{gf} we have
	\begin{align*}
	T(\mathcal{F}^{++}_n)=&-yAT(G/\{v, u\})\sum_{j=0}^{\lfloor \frac{n-2}{2}\rfloor}
	\binom{n-j-2}{j}(A+C)^{n-2j-2}A^{j}(B-C)^{j}\\
	&+T(G^{++})\sum_{j=0}^{\lfloor \frac{n-1}{2}\rfloor}
	\binom{n-j-1}{j}(A+C)^{n-2j-1}(AB-AC)^{j}.\qedhere
	\end{align*}
\end{proof}

 We can further define the wheel-like graph as follows. The wheel-like graph $\mathcal{W}_n$ can be obtained from $\mathcal{F}_n$ by adding an edge $e_n$ connecting $u$ in $H_1$ and $u$ in $H_n$ and $n\geq 2$, see Fig. 6. If $H_i=K_2$ for $i=1,2,\ldots,n$ then $\mathcal{W}_n$ is reduced to the wheel graph $W_n$.

\begin{center}
\includegraphics{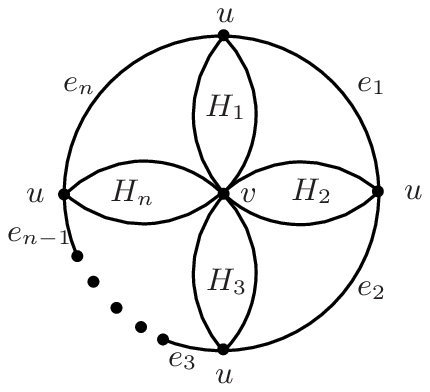}
	\put(-130,-16){\mbox{\scriptsize\quad Fig. 6. The wheel-like graph $\mathcal{W}_n$.}}
\end{center}
\begin{theorem}\label{GWn}
	Let $n\geq 3$ be an integer.
	Then
	\begin{align*} T(\mathcal{W}_n)=&\frac{A^{n-2}}{x^{n-2}}T(\mathcal{W}_2)+\sum_{i=2}^{n-1}\frac{A^{n-i-1}}{x^{n-i-1}}\Big[(A(B-C)T(G)+(1-y)ABT(G/\{v,u\}))\\
  &\sum_{j=0}^{\lfloor \frac{i-2}{2}\rfloor}
	\binom{i-j-2}{j}(A+C)^{i-2j-2}(AB-AC)^{j}+((A+B)T(G)\\
	&+(y+1)BT(G/\{v, u\}))\sum_{j=0}^{\lfloor \frac{i-1}{2}\rfloor}
	\binom{i-j-1}{j}(A+C)^{i-2j-1}(AB-AC)^{j}\Big].
	\end{align*}
	\end{theorem}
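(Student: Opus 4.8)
The plan is to mirror the structure of the proofs of Theorems~\ref{GFn} and~\ref{GFn+}, namely to decompose $\mathcal{W}_n$ along the extra edge $e_n$ and reduce the computation to the already-known Tutte polynomial of $\mathcal{F}_n$ (and its $+$ variant), then assemble a telescoping/recursive sum. First I would observe that $\mathcal{W}_n$ is built from $\mathcal{F}_n$ by adding the single edge $e_n$ joining $u$ in $H_1$ to $u$ in $H_n$. Since $e_n$ is an ordinary edge (neither bridge nor loop for $n\geq 3$), the deletion-contraction formula gives $T(\mathcal{W}_n)=T(\mathcal{W}_n-e_n)+T(\mathcal{W}_n/e_n)=T(\mathcal{F}_n)+T(\mathcal{W}_n/e_n)$. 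The nontrivial part is to handle the contraction $\mathcal{W}_n/e_n$, which identifies the two outer $u$-vertices. I would set up a two-vertex splitting along the common vertices $v$ and the identified $u$, exactly as in the proofs above, applying Lemma~\ref{H1H2} to express the relevant pieces in terms of $T(G)$, $T(G/\{v,u\})$, and the coefficients $A$, $B$, $C$.

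Next I would establish the recursion in $n$. The key structural fact is that peeling off the copy $H_n$ together with its connecting edge $e_{n-1}$ (and tracking how $e_n$ attaches) should relate $T(\mathcal{W}_n)$ to $T(\mathcal{W}_{n-1})$ via a factor of $A/x$, together with an inhomogeneous term coming from the $H_n$ block. Concretely, I expect a recurrence of the shape
\begin{align*}
T(\mathcal{W}_n)=\frac{A}{x}\,T(\mathcal{W}_{n-1})+R_n,
\end{align*}
where $R_n$ is an explicit term expressible through $T(\mathcal{F}_{n-1})$ and $T(\mathcal{F}^{+}_{n-1})$ via Lemma~\ref{H1H2}. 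Iterating this first-order recurrence from the base case $\mathcal{W}_2$ yields
\begin{align*}
T(\mathcal{W}_n)=\Big(\frac{A}{x}\Big)^{n-2}T(\mathcal{W}_2)+\sum_{i=2}^{n-1}\Big(\frac{A}{x}\Big)^{n-i-1}R_{i+1},
\end{align*}
which already matches the $A^{n-2}/x^{n-2}$ prefactor and the summation range $i=2,\dots,n-1$ in the claimed formula.

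It then remains to identify $R_{i+1}$ with the bracketed expression in the statement. Here I would substitute the closed forms for $T(\mathcal{F}_{i})$ and $T(\mathcal{F}^{+}_{i})$; since Theorem~\ref{GFn} gives $T(\mathcal{F}_i)$ as a combination of the two convolution sums $\sum_j\binom{i-j-1}{j}(A+C)^{i-2j-1}(AB-AC)^j$ and $\sum_j\binom{i-j-2}{j}(A+C)^{i-2j-2}(AB-AC)^j$, the inhomogeneous term should inherit exactly these two sums, with coefficients $A(B-C)T(G)+(1-y)ABT(G/\{v,u\})$ and $(A+B)T(G)+(y+1)BT(G/\{v,u\})$ respectively. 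I would verify these coefficients by carefully tracking the Lemma~\ref{H1H2} splitting applied to the contracted graph, paying attention to the factors of $y$ that arise when $e_n$ becomes a loop-like configuration after the two $u$-vertices are identified in $G/\{v,u\}$.

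The main obstacle I anticipate is the bookkeeping in the splitting step: correctly computing $T$ of the two pieces and their $\{v,u\}$-contractions when the cycle-closing edge $e_n$ is present, so that the coefficients $(1-y)$ and $(y+1)$ emerge with the right signs. This is the place where the extra edge genuinely distinguishes $\mathcal{W}_n$ from $\mathcal{F}_n$, and where a sign or a stray factor of $y$ is easiest to lose; everything else is a routine application of Lemma~\ref{H1H2}, the generating-function machinery of Lemma~\ref{gflambda}, and the binomial expansion of Lemma~\ref{gf}.
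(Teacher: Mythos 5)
Your skeleton is the paper's: a first-order recurrence $T(\mathcal{W}_n)=\frac{A}{x}T(\mathcal{W}_{n-1})+R_n$, iterated down to $\mathcal{W}_2$ (giving the prefactor $A^{n-2}/x^{n-2}$ and the sum over $i=2,\dots,n-1$), followed by substitution of closed forms. But the decisive identification is wrong: the inhomogeneous term is $R_n=A\,T(\mathcal{F}_{n-1})+B\,T(\mathcal{F}^{++}_{n-1})$, involving the \emph{doubly} augmented graph $\mathcal{F}^{++}_{n-1}$, not $T(\mathcal{F}^{+}_{n-1})$ as you assert twice. This is not a cosmetic slip. First, the paper provides no closed form for $T(\mathcal{F}^{+}_i)$ at all; the only expansions available for substitution are Equation~(\ref{F2}) for $T(\mathcal{F}_i)$ and Equation~(\ref{F+2}) of Theorem~\ref{GFn+} for $T(\mathcal{F}^{++}_i)$ (your plan cites only Theorem~\ref{GFn}), so a recursion written through $\mathcal{F}^{+}$ cannot be closed with the tools at hand. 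Second, the coefficients in the statement arise exactly from the combination $A\cdot(\ref{F2})+B\cdot(\ref{F+2})$: the short sums collect $A(B-C)T(G)+ABT(G/\{v,u\})-yABT(G/\{v,u\})=A(B-C)T(G)+(1-y)ABT(G/\{v,u\})$, and the long sums collect $AT(G)+BT(G^{++})$, which becomes $(A+B)T(G)+(y+1)BT(G/\{v,u\})$ via the identity $T(G^{++})=T(G)+(y+1)T(G/\{v,u\})$. So the factor $(y+1)$ comes from this identity for $G^{++}$, not from "loop-like" bookkeeping inside the splitting, which is where you locate it.

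The recurrence itself is also asserted rather than derived, and your opening move points away from the derivation. The paper obtains the $A/x$ factor and $R_n$ in a single application of Lemma~\ref{H1H2}: take $S_1=H_n$ and $S_2$ the subgraph induced by $E(\mathcal{W}_n)\setminus E(H_n)$ (which contains both $e_{n-1}$ and $e_n$), with $T(S_1)=T(G)$, $T(S_1/\{v,u\})=T(G/\{v,u\})$, and the two crucial evaluations $T(S_2)=xT(\mathcal{F}_{n-1})+T(\mathcal{W}_{n-1})$ (deletion-contraction on $e_n$ inside $S_2$: deleting it makes $e_{n-1}$ a bridge over $\mathcal{F}_{n-1}$, contracting it yields $\mathcal{W}_{n-1}$) and $T(S_2/\{v,u\})=T(\mathcal{F}^{++}_{n-1})$ (identifying $v$ with $u\in V(H_n)$ turns $e_{n-1}$ and $e_n$ into precisely the two added edges of $\mathcal{F}^{++}_{n-1}$ --- this is where the $++$ graph unavoidably enters). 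Your preliminary step $T(\mathcal{W}_n)=T(\mathcal{F}_n)+T(\mathcal{W}_n/e_n)$ is true but is never used by the rest of your plan, and it commits you to analyzing $\mathcal{W}_n/e_n$, a graph to which none of the established formulas apply. Until the splitting computation above (or an equivalent one) is actually carried out, the coefficients in your final step are read off the target formula rather than proved.
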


\begin{proof}
Let $S_1$ and $S_2$ be subgraphs of $\mathcal{W}_n$ induced by the sets $E(H_n)$ and $E(\mathcal{W}_n)\setminus E(H_n)$, respectively.
Clearly, $T(S_1)=T(G)$, $T(S_1/\{v, u\})=T(G/\{v, u\})$, $T(S_2)=xT(\mathcal{F}_{n-1})+T(\mathcal{W}_{n-1})$ and $T(S_2/\{v, u\})=T(\mathcal{F}^{++}_{n-1})$, where $u\in V(H_n)$. By Lemma \ref{H1H2}, we have
	\begin{align*}
  T(\mathcal{W}_n)=&\frac{A}{x}T(\mathcal{W}_{n-1})+AT(\mathcal{F}_{n-1})+BT(\mathcal{F}^{++}_{n-1})\\
   =&\frac{A^{n-2}}{x^{n-2}}T(\mathcal{W}_2)+\sum_{i=2}^{n-1}\frac{A^{n-i-1}}{x^{n-i-1}}(AT(\mathcal{F}_{i})+BT(\mathcal{F}^{++}_{i})).
   \end{align*}
   By Equations (\ref{F2}) and (\ref{F+2}) we have
  \begin{align*}
  T(\mathcal{W}_n)=&\frac{A^{n-2}}{x^{n-2}}T(\mathcal{W}_2)+\sum_{i=2}^{n-1}\frac{A^{n-i-1}}{x^{n-i-1}}(AT(\mathcal{F}_{i})+BT(\mathcal{F}^{++}_{i}))\\
  =&\frac{A^{n-2}}{x^{n-2}}T(\mathcal{W}_2)+\sum_{i=2}^{n-1}\frac{A^{n-i-1}}{x^{n-i-1}}\Big[(A(B-C)T(G)+ABT(G/\{v,u\}))\\
  &\sum_{j=0}^{\lfloor \frac{i-2}{2}\rfloor}
	\binom{i-j-2}{j}(A+C)^{i-2j-2}(AB-AC)^{j}\\
	&+AT(G)\sum_{j=0}^{\lfloor \frac{i-1}{2}\rfloor}
	\binom{i-j-1}{j}(A+C)^{i-2j-1}(AB-AC)^{j}\\
  &-yABT(G/\{v, u\})\sum_{j=0}^{\lfloor \frac{i-2}{2}\rfloor}
	\binom{i-j-2}{j}(A+C)^{i-2j-2}(AB-AC)^{j}\\
	&+BT(G^{++})\sum_{j=0}^{\lfloor \frac{i-1}{2}\rfloor}
	\binom{i-j-1}{j}(A+C)^{i-2j-1}(AB-AC)^{j}\Big]\\
  =&\frac{A^{n-2}}{x^{n-2}}T(\mathcal{W}_2)+\sum_{i=2}^{n-1}\frac{A^{n-i-1}}{x^{n-i-1}}\Big[(A(B-C)T(G)+(1-y)ABT(G/\{v,u\}))\\
  &\sum_{j=0}^{\lfloor \frac{i-2}{2}\rfloor}
	\binom{i-j-2}{j}(A+C)^{i-2j-2}(AB-AC)^{j}+(AT(G)+BT(G^{++}))\\
	&\sum_{j=0}^{\lfloor \frac{i-1}{2}\rfloor}
	\binom{i-j-1}{j}(A+C)^{i-2j-1}(AB-AC)^{j}\Big].
  \end{align*}
  Note that $T(G^{++})=T(G)+(y+1)T(G/\{v, u\})$. Thus the result holds.
\end{proof}

As an immediate consequence of Theorem \ref{GWn}, we have
\begin{corollary}
	Let $n\geq 3$ be an integer.
	Then
	\begin{align*}
T(W_n)=&\sum_{i=2}^{n-1}\Big[(xy(1-x-y))\sum_{j=0}^{\lfloor \frac{i-2}{2}\rfloor}
	\binom{i-j-2}{j}(x+y+1)^{i-2j-2}(-xy)^{j}\\
&+(x^2+x+y+y^2)\sum_{j=0}^{\lfloor \frac{i-1}{2}\rfloor}
	\binom{i-j-1}{j}(x+y+1)^{i-2j-1}(-xy)^{j}\Big]\\
&+x^2+x+xy+y+y^2.
	\end{align*}
\end{corollary}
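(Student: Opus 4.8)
The plan is to obtain this corollary as a direct specialization of Theorem \ref{GWn} to the case $G=K_2$, in which $\mathcal{W}_n$ reduces to the ordinary wheel $W_n$. First I would record the two basic Tutte polynomials that feed the formula: since $K_2$ is a single edge (a bridge) we have $T(G)=T(K_2)=x$, and since contracting that edge produces a single loop we have $T(G/\{v,u\})=y$.

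Next I would substitute these two values into the definitions of $A$, $B$, $C$ and simplify. The key observation is that each numerator collapses to a scalar multiple of the denominator $xy-x-y$: one checks $(y-1)x-y=xy-x-y$, so $A=x$; $(x-1)y-x=xy-x-y$, so $B=1$; and $(xy-y-1)y-x=(y+1)(xy-x-y)$, so $C=y+1$. In particular $A+C=x+y+1$ and $AB-AC=-xy$, which already match the bases of the two binomial sums in the target expression.

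With $A=x$ the geometric prefactors trivialize, since $\frac{A^{n-2}}{x^{n-2}}=\frac{A^{n-i-1}}{x^{n-i-1}}=1$, so the outer weights in Theorem \ref{GWn} disappear. It then remains only to simplify the two bracketed coefficients: a short computation gives $A(B-C)T(G)+(1-y)ABT(G/\{v,u\})=xy(1-x-y)$ and $(A+B)T(G)+(y+1)BT(G/\{v,u\})=x^2+x+y+y^2$, which are precisely the coefficients multiplying the two sums in the statement.

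The only remaining input is the base value $T(\mathcal{W}_2)=T(W_2)$. Here $W_2=K_1\vee C_2$ is the multigraph obtained from a triangle by doubling one edge; applying deletion-contraction to one of the parallel edges splits it into the triangle $C_3$ (with Tutte polynomial $x^2+x+y$) together with a two-vertex double edge carrying a loop (with Tutte polynomial $(x+y)y$ by Theorem \ref{GH}), yielding $T(W_2)=x^2+x+xy+y+y^2$, exactly the constant term of the corollary. Assembling these pieces into the formula of Theorem \ref{GWn} gives the claim. I expect no genuine obstacle, as every step is a routine substitution; the one point demanding care is recognizing that $W_2$ is a multigraph (a triangle with a parallel edge), so that its base Tutte polynomial is computed correctly.
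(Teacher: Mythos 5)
Your proposal is correct and is exactly the paper's (implicit) argument: the paper states this corollary as an immediate specialization of Theorem \ref{GWn} to $G=K_2$, which is what you carry out. All of your computed values check out: $A=x$, $B=1$, $C=y+1$, the two bracketed coefficients simplify to $xy(1-x-y)$ and $x^2+x+y+y^2$, and $T(W_2)=x^2+x+xy+y+y^2$ for the doubled-edge triangle.
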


\subsection{The second kind of fan-like graphs}

In the following, we consider another kind of more complicated fan-like graphs.

Let $G$ be a connected graph with at least three distinct vertices $v$, $u$ and $w$, and $H_i$ be a copy of $G$ for $i=1,2,\ldots,n$. Then the fan-like graph $\mathcal{G}_n$ is defined to be the graph formed by identifying each vertex $v$ of $H_i$'s into a new vertex and connecting the vertex $w$ of $H_i$ to the vertex $u$ of $H_{i+1}$ by an edge $e_i$ for $i=1,2,\ldots,n-1$, see Fig. 7. Moreover, we define $^{+}\mathcal{G}_n$ and $^{+}\mathcal{G}^{+}_n$ similarly. The graph $^{+}\mathcal{G}_n$ is obtained from $\mathcal{G}_n$ by adding an edge connecting $v$ and $u$ of $H_1$, and the graph $^{+}\mathcal{G}^{+}_n$ is obtained from $^{+}\mathcal {G}_n$ by adding an edge connecting $v$ and $w$ of $H_n$, see Fig. 8. In particular, $^{+}\mathcal{G}_1=^+G$ and $^{+}\mathcal{G}^{+}_1=^{+}G^{+}$.
\begin{center}
\includegraphics[width=12cm,height=3cm]{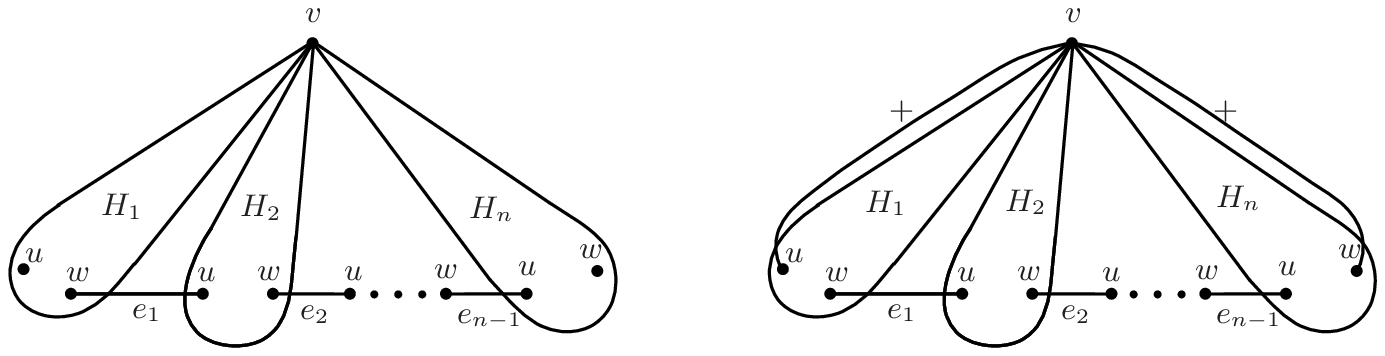}
\put(-330,-19){\mbox{\scriptsize\quad Fig. 7. The fan-like graph $\mathcal{G}_n$.}}
\put(-115,-19){\mbox{\scriptsize\quad Fig. 8. $^{+}\mathcal{G}^{+}_n$.}}
\end{center}

\begin{theorem}\label{Gn}
Let $n\geq 1$ be an integer.
Then
\begin{align}\label{G1}
T(\mathcal{G}_n)=T(G)\frac{\lambda^n_1-\lambda^n_2}{\lambda_1-\lambda_2}+(BT(G/\{v,u\})
-DT(G))\frac{\lambda^{n-1}_1-\lambda^{n-1}_2}{\lambda_1-\lambda_2},
\end{align}
or
\begin{align*}
T(\mathcal{G}_n)=&(BT(G/\{v,u\})-DT(G))\sum_{j=0}^{\lfloor \frac{n-2}{2}\rfloor}
\binom{n-j-2}{j}(A+D)^{n-2j-2}(BC-AD)^{j} \\
&+T(G)\sum_{j=0}^{\lfloor \frac{n-1}{2}\rfloor}
\binom{n-j-1}{j}(A+D)^{n-2j-1}(BC-AD)^{j},
\end{align*}
where\begin{align*}
A=&\frac{1}{xy-x-y}((xy-x-1)T(G)-T(G/\{v, w\})),\\
B=&\frac{1}{xy-x-y}((x-1)T(G/\{v, w\})-T(G)),\\
C=&\frac{1}{xy-x-y}((xy-x-1)T(G/\{v, u\})-T(G/\{v, u, w\})),\\
D=&\frac{1}{xy-x-y}((x-1)T(G/\{v, u, w\})-T(G/\{v, u\})),
\end{align*}
and $$\lambda_{1,2}=\frac{A+D\pm\sqrt{(A-D)^2+4BC}}{2}.$$
\end{theorem}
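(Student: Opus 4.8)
The plan is to mirror the proofs of Theorems~\ref{GFn} and \ref{GFn+}: establish a two-term linear recurrence relating $T(\mathcal{G}_n)$ to a single auxiliary quantity, repackage it as a rational generating function, and then read off both displayed forms directly from Lemmas~\ref{gflambda} and \ref{gf}. The precise shape of the constants is the tell: each of $A,B,C,D$ has the form produced by Corollary~\ref{mT} (note the $(xy-x-1)$ and $(x-1)$ coefficients), so the connecting edge $e_i$ should be absorbed by that corollary, and the right companion graph is the \emph{contraction} $\mathcal{G}_n^{\ast}:=\mathcal{G}_n/\{v,u\}$ obtained by identifying the hub $v$ with the vertex $u$ of the first copy $H_1$ (equivalently, replacing $H_1$ by $G/\{v,u\}$ and leaving $w$ and the chain $e_1,\dots,e_{n-1}$ untouched). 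The base cases are $T(\mathcal{G}_1)=T(G)$ and $T(\mathcal{G}_1^{\ast})=T(G/\{v,u\})$.

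First I would peel the first copy off $\mathcal{G}_n$. Viewing $\mathcal{G}_n$ as $H_1$ together with the block $\mathcal{G}_{n-1}$ carried by $H_2,\dots,H_n$, the two pieces meet only at $v$ and are joined by the single edge $e_1$ from the vertex $w$ of $H_1$ to the vertex $u$ of $H_2$; this is exactly the hypothesis of Corollary~\ref{mT} with $u_1=w$ and $u_2=u$. Hence $H_1'=G/\{v,w\}$, while the block contracts along $\{v,u_2\}$ to $\mathcal{G}_{n-1}^{\ast}$, and collecting the coefficients of $T(\mathcal{G}_{n-1})$ and $T(\mathcal{G}_{n-1}^{\ast})$ returns exactly $A$ and $B$, giving $T(\mathcal{G}_n)=A\,T(\mathcal{G}_{n-1})+B\,T(\mathcal{G}_{n-1}^{\ast})$. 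Repeating the identical peeling on $\mathcal{G}_n^{\ast}$---whose first copy is now $G/\{v,u\}$, so the two ``left'' factors become $T(G/\{v,u\})$ and $T(G/\{v,u,w\})$ while the block is unchanged---yields $T(\mathcal{G}_n^{\ast})=C\,T(\mathcal{G}_{n-1})+D\,T(\mathcal{G}_{n-1}^{\ast})$. Thus the pair $\bigl(T(\mathcal{G}_n),T(\mathcal{G}_n^{\ast})\bigr)$ obeys a linear system with the advertised constants.

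Next I would set $F(z)=\sum_{n\ge1}T(\mathcal{G}_n)z^n$ and $F^{\ast}(z)=\sum_{n\ge1}T(\mathcal{G}_n^{\ast})z^n$. The recurrence becomes $(1-Az)F-Bz\,F^{\ast}=T(G)z$ and $-Cz\,F+(1-Dz)F^{\ast}=T(G/\{v,u\})z$, and eliminating $F^{\ast}$ gives
\[
F(z)=\frac{T(G)\,z+\bigl(B\,T(G/\{v,u\})-D\,T(G)\bigr)z^{2}}{(AD-BC)z^{2}-(A+D)z+1}.
\]
With $p=AD-BC$ and $q=A+D$, Lemma~\ref{gflambda} gives $\lambda_{1,2}=\frac{A+D\pm\sqrt{(A-D)^{2}+4BC}}{2}$, and splitting $F$ into its $z$- and $z^{2}$-parts over this common denominator produces Equation~(\ref{G1}), the $z^{2}$-part carrying the coefficient $B\,T(G/\{v,u\})-D\,T(G)$ together with the shifted factor $\frac{\lambda_1^{n-1}-\lambda_2^{n-1}}{\lambda_1-\lambda_2}$. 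Feeding the same $p,q$ into Lemma~\ref{gf}, and using $(-1)^{j}(AD-BC)^{j}=(BC-AD)^{j}$, converts the two parts (with the index shifts $a_{n-1}$ and $a_{n-2}$) into the stated binomial sums.

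The main obstacle is the bookkeeping in the peeling step, i.e.\ rigorously verifying the four Tutte-polynomial identities in each application of Corollary~\ref{mT}: that the block of $\mathcal{G}_n$ is $\mathcal{G}_{n-1}$ with the correct endpoint $u_2=u$ of $H_2$, that contracting it along $\{v,u_2\}$ reproduces $\mathcal{G}_{n-1}^{\ast}$ (an identification, not the addition of an edge), and that passing from $\mathcal{G}_n$ to $\mathcal{G}_n^{\ast}$ alters only the first copy. Once these identifications are nailed down, Corollary~\ref{mT} hands back the constants $A,B,C,D$ verbatim and the remainder is the routine extraction above.
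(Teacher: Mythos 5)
Your proposal is correct and follows essentially the same route as the paper: you define the same auxiliary contraction $\mathcal{G}_n/\{v,u\}$, peel off $H_1$ via the edge $e_1$ using Corollary~\ref{mT} to get the identical coupled recurrences with constants $A,B,C,D$, and then extract both closed forms from the same rational generating function via Lemmas~\ref{gflambda} and \ref{gf}. The only difference is presentational — you spell out the elimination step and the sign manipulation $(-1)^j(AD-BC)^j=(BC-AD)^j$ that the paper leaves implicit.
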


\begin{proof}
Let $\mathcal{G}'_n=\mathcal{G}_n/\{v, u\}$, where $u\in V(H_1)$.
From Corollary \ref{mT}, conducting deletion-contraction operation for the edge $e_1=wu$ connecting
$H_1$ and $H_2$ in $\mathcal{G}_n$ and $\mathcal{G}'_n$, respectively, we have
\begin{eqnarray}\label{G2}
	\left\{\begin {array}{l}
	T(\mathcal{G}_n)=AT(\mathcal{G}_{n-1})+BT(\mathcal{G}'_{n-1}), \\
	T(\mathcal{G}'_{n})=CT(\mathcal{G}_{n-1})+DT(\mathcal{G}'_{n-1}).
\end{array}
\right.
\end{eqnarray}

Let $$F(z)=\sum_{n\geq 1}T(\mathcal{G}_n)z^n \text{ and } G(z)=\sum_{n\geq 1}T(\mathcal{G}'_n)z^n.$$

By Equation (\ref{G2}), we have
$$F(z)=\frac{(BT(G/\{v,u\})-DT(G))z^2+T(G)z}{(AD-BC)z^2-(A+D)z+1}.$$
Thus it is clear from Lemma \ref{gflambda} to obtain Equation (\ref{G1}).
Moreover, by Lemma \ref{gf} we have
\begin{align*}
T(\mathcal{G}_n)=&(BT(G/\{v,u\})-DT(G))\sum_{j=0}^{\lfloor \frac{n-2}{2}\rfloor}
\binom{n-j-2}{j}(A+D)^{n-2j-2}(BC-AD)^{j}\\
&+T(G)\sum_{j=0}^{\lfloor \frac{n-1}{2}\rfloor}
\binom{n-j-1}{j}(A+D)^{n-2j-1}(BC-AD)^{j}.\qedhere
\end{align*}
\end{proof}

The next goal is to get the Tutte polynomials of $^{+}\mathcal{G}^{+}_n$.

\begin{theorem}\label{GGn+}
Let $n\geq 1$ be an integer. Then
\begin{align}\label{G+1}
T(^{+}\mathcal{G}^{+}_n)=T(^{+}G^{+})\frac{\lambda^n_1-\lambda^n_2}{\lambda_1-\lambda_2}+(CT(^{+}G)-AT(^{+}G^{+}))\frac{\lambda^{n-1}_1-\lambda^{n-1}_2}{\lambda_1-\lambda_2},
\end{align}
	or
	\begin{align*}
	T(^{+}\mathcal{G}^{+}_n)=&(CT(^{+}G)-AT(^{+}G^{+}))\sum_{j=0}^{\lfloor \frac{n-2}{2}\rfloor}
	\binom{n-j-2}{j}(A+D)^{n-2j-2}(BC-AD)^{j}\\
	&+T(^{+}G^{+})\sum_{j=0}^{\lfloor \frac{n-1}{2}\rfloor}
	\binom{n-j-1}{j}(A+D)^{n-2j-1}(BC-AD)^{j},
	\end{align*}
	where
	\begin{align*}
	A=&\frac{x}{xy-x-y}((y-1)T(G)-T(G/\{v, u\})),\\
	B=&\frac{1}{xy-x-y}((x-1)T(G/\{v, u\})-T(G)),\\
	C=&\frac{x}{xy-x-y}((y-1)T(G^{+})-T(G^{+}/\{v, u\})),\\
	D=&\frac{1}{xy-x-y}((x-1)T(G^{+}/\{v, u\})-T(G^{+})),
	\end{align*}
	and $$\lambda_{1,2}=\frac{A+D\pm\sqrt{(A-D)^2+4BC}}{2}.$$
\end{theorem}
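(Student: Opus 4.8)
The plan is to mirror exactly the method used in Theorem~\ref{GGn+}'s predecessors (Theorems~\ref{GFn} and \ref{Gn}), since $^{+}\mathcal{G}^{+}_n$ is built from the same repeated copies of $G$ as $\mathcal{G}_n$ but with the two decorating edges at $H_1$ and $H_n$. The key realization is that $^{+}\mathcal{G}^{+}_n$ and $^{+}\mathcal{G}_n$ satisfy a coupled pair of linear recursions in $n$, and once that pair is established the generating-function machinery of Lemmas~\ref{gflambda} and \ref{gf} finishes the job verbatim. So the real content is deriving the recursion with the correct coefficients $A,B,C,D$ as defined in the statement.

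First I would set up the two auxiliary graph families. Writing $^{+}\mathcal{G}^{+}_n$ as a union of a left block containing $H_1,\dots,H_{n-1}$ together with the connecting edge $e_{n-1}$ (and the ``$+$'' edge at $H_1$), and a right block consisting of $H_n$ together with its ``$+$'' edge, these two blocks share exactly the apex vertex $v$ and the vertex $u$ of $H_n$. I would then apply Lemma~\ref{H1H2} to this $2$-vertex splitting. To evaluate the four ingredients the lemma needs, I compute $T$ and $T(\cdot/\{v,u\})$ of each block: the right block is $G^{+}$, so its two values are $T(G^{+})$ and $T(G^{+}/\{v,u\})$, which is precisely what feeds the coefficients $C$ and $D$; the left block contributes $x\,T(^{+}\mathcal{G}^{+}_{n-1})$ and $T(^{+}\mathcal{G}_{n-1})$ after contraction, exactly as in the analogous steps of Theorems~\ref{GFn+} and \ref{Gn}. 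Doing the same splitting on $^{+}\mathcal{G}_n$ (where the right block is $G$ rather than $G^{+}$, giving the coefficients $A,B$) yields the second equation. The outcome is the system
\begin{eqnarray*}
\left\{
\begin{array}{l}
T(^{+}\mathcal{G}_n)=A\,T(^{+}\mathcal{G}_{n-1})+B\,T(^{+}\mathcal{G}^{+}_{n-1}),\\
T(^{+}\mathcal{G}^{+}_n)=C\,T(^{+}\mathcal{G}_{n-1})+D\,T(^{+}\mathcal{G}^{+}_{n-1}),
\end{array}
\right.
\end{eqnarray*}
with the base cases $T(^{+}\mathcal{G}_1)=T(^{+}G)$ and $T(^{+}\mathcal{G}^{+}_1)=T(^{+}G^{+})$ noted in the definition.

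Having the recursion, I would introduce the generating functions $F(z)=\sum_{n\geq 1}T(^{+}\mathcal{G}_n)z^n$ and $G(z)=\sum_{n\geq 1}T(^{+}\mathcal{G}^{+}_n)z^n$, translate the two recursions into two linear equations in $F(z),G(z)$, and solve for $G(z)$. The determinant of the coupling matrix produces the denominator $(AD-BC)z^2-(A+D)z+1$, matching the form $px^2-qx+1$ with $p=AD-BC$ and $q=A+D$; the numerator collects the initial terms into $(CT(^{+}G)-AT(^{+}G^{+}))z^2+T(^{+}G^{+})z$. Then Lemma~\ref{gflambda} immediately gives the closed form~(\ref{G+1}) with $\lambda_{1,2}$ as stated, and Lemma~\ref{gf} gives the binomial-sum form with $BC-AD$ inside.

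The main obstacle is bookkeeping rather than conceptual: I must verify that the left-block contractions really produce $x\,T(^{+}\mathcal{G}^{+}_{n-1})$ and $T(^{+}\mathcal{G}_{n-1})$, i.e. that identifying $v$ with the attachment vertex of the pendant edge $e_{n-1}$ transforms the decorated $(n-1)$-block into the next member of the family and contributes the correct factor of $x$ from the now-bridge edge, exactly as in the proof of Theorem~\ref{GFn+}. Equally delicate is confirming that the right block in the $^{+}\mathcal{G}_n$ recursion is a plain copy of $G$ while in the $^{+}\mathcal{G}^{+}_n$ recursion it carries the extra ``$+$'' edge making it $G^{+}$ — this is what makes one pair of coefficients use $T(G),T(G/\{v,u\})$ and the other use $T(G^{+}),T(G^{+}/\{v,u\})$. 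Once these identifications are checked against the figures, the algebra is routine and parallels the earlier theorems line for line.
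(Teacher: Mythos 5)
Your proposal is correct and follows essentially the same route as the paper's proof: split off $H_n$ (without or with its ``$+$'' edge) from the rest via Lemma \ref{H1H2}, obtain the coupled system $T({}^{+}\mathcal{G}_n)=AT({}^{+}\mathcal{G}_{n-1})+BT({}^{+}\mathcal{G}^{+}_{n-1})$, $T({}^{+}\mathcal{G}^{+}_n)=CT({}^{+}\mathcal{G}_{n-1})+DT({}^{+}\mathcal{G}^{+}_{n-1})$, and then solve for the generating function and finish with Lemmas \ref{gflambda} and \ref{gf}, exactly as the paper does. One prose slip worth fixing: the factor $x$ belongs on the \emph{uncontracted} left block, i.e. $T(S_1)=xT({}^{+}\mathcal{G}_{n-1})$ (since the pendant edge $e_{n-1}$ is a bridge there) while $T(S_1/\{v,u\})=T({}^{+}\mathcal{G}^{+}_{n-1})$, not the other way around as your sentence suggests; your displayed recursion is nonetheless the correct one and matches the paper's.
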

\begin{proof}
	Let $R_1$ and $R_2$ be subgraphs of $^{+}\mathcal{G}_n$ induced by the sets $(\bigcup_{1\leq i\leq n-1} E(H_{i}))\cup \{e_{n-1},+\}$ and $E(H_n)$, respectively.
Clearly, $T(R_1)=xT(^{+}\mathcal{G}_{n-1})$, $T(R_1/\{v, u\})=T(^{+}\mathcal{G}^{+}_{n-1})$, $T(R_2)=T(G)$ and $T(R_2/\{v, u\})=T(G/\{v, u\})$, where $u\in V(H_n)$.
Let $S_1$ and $S_2$ be subgraphs of $^{+}\mathcal{G}^{+}_n$ induced by the sets $(\bigcup_{1\leq i\leq n-1} E(H_{i}))\cup \{e_{n-1}\}\cup\{+\}$ (the left one) and $E(H_n)\cup \{+\}$ (the right one), respectively.
Clearly, $T(S_1)=xT(^{+}\mathcal{G}_{n-1})$, $T(S_1/\{v, u\})=T(^{+}\mathcal{G}^{+}_{n-1})$, $T(S_2)= T(G^{+})$ and $T(S_2/\{v, u\})=T(G^{+}/\{v, u\})$, where $u\in V(H_n)$.
From Lemma \ref{H1H2}, we have

\begin{eqnarray}\label{G+2}
	\left\{\begin {array}{l}
	T(\mathcal{G}^{+}_n)=AT(\mathcal{G}^{+}_{n-1})+BT(\mathcal{G}^{++}_{n-1}), \\
	T(\mathcal{G}^{++}_{n})=CT(\mathcal{G}^{+}_{n-1})+DT(\mathcal{G}^{++}_{n-1}).
\end{array}
\right.
\end{eqnarray}

Let $$F(z)=\sum_{n\geq 1}T(^{+}\mathcal{G}_n)z^n \text{ and } G(z)=\sum_{n\geq 1}T(^{+}\mathcal{G}^{+}_n)z^n.$$
By Equation (\ref{G+2}), we have
$$G(z)=\frac{(CT(G^{+})-AT(G^{++}))z^2+T(G^{++})z}{(AD-BC)z^2-(A+D)z+1}.$$
Thus it is clear from Lemma \ref{gflambda} to obtain Equation (\ref{G+1}).
Moreover, by Lemma \ref{gf} we have	
	\begin{align*}
	T(^{+}\mathcal{G}^{+}_n)=&(CT(^{+}G)-AT(^{+}G^{+}))\sum_{j=0}^{\lfloor \frac{n-2}{2}\rfloor}
	\binom{n-j-2}{j}(A+D)^{n-2j-2}(BC-AD)^{j}\\
	&+T(^{+}G^{+})\sum_{j=0}^{\lfloor \frac{n-1}{2}\rfloor}
	\binom{n-j-1}{j}(A+D)^{n-2j-1}(BC-AD)^{j}.\qedhere
	\end{align*}
\end{proof}

This theorem will be used to obtain the Tutte polynomials of two families of benzenoid systems in the next section.

\section{Applications}

\emph{A benzenoid system} is a finite connected plane graph without cut vertices in which every
interior face is bounded by a regular hexagon of side length $1$. A benzenoid system is called \emph{catacondensed} if there are no three hexagons sharing one common vertex.
As mentioned in the Introduction, the Tutte polynomials of several classes of benzenoid systems are derived. One idea computing the Tutte polynomials of benzenoid systems is to compute the Tutte polynomials of their duals.

\subsection{Benzenoid chains}

 A \emph{benzenoid chain}, denoted by $L_n$, is a catacondensed benzenoid system in which no hexagon is adjacent to three hexagons and there exist exact two hexagons that are adjacent to one hexagon, see Fig. 9.
\begin{center}
\includegraphics{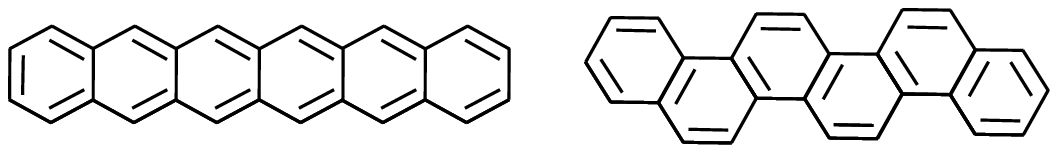}
\put(-250,-19){\mbox{\scriptsize\quad Fig. 9. Two benzenoid chains with $6$ hexagons.}}
\end{center}
In\cite{Fath-Tabar} and \cite{Do}, the Tutte polynomial of benzenoid chains was obtained.
\begin{theorem}\emph{\cite{Fath-Tabar,Do}}
Let $n\geq 1$ be an integer. Then
\begin{align*}
    T(L_n)=A\left (\frac{J+\sqrt{\Delta}}{2}\right )^n+B\left (\frac{J-\sqrt{\Delta}}{2}\right )^n,
    \end{align*}
 where
 \begin{align*}
J=&\sum^{4}_{i=0}x^i+y,
\Delta=\left (\sum^{4}_{i=0}x^i\right )^2+2y\sum^{4}_{i=0}x^i+y^2-4x^4y,\\
A=&\frac{2T(L_2)-T(L_1)(J-\sqrt{\Delta})}{\Delta+J\sqrt{\Delta}},
B=\frac{2T(L_2)-T(L_1)(J+\sqrt{\Delta})}{\Delta-J\sqrt{\Delta}}.
\end{align*}
\end{theorem}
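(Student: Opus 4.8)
The plan is to realise $L_n$ as the planar dual of a first-kind fan-like graph and then quote Theorem~\ref{GFn+}. First I would describe $D(L_n)$: place one vertex on each of the $n$ hexagonal faces and one vertex $O$ on the outer face. In any benzenoid chain each interior hexagon shares exactly one edge with each neighbour, so in the dual consecutive face-vertices are joined by a single edge, while the remaining boundary edges of a hexagon become parallel edges to $O$ — a middle hexagon contributes four such edges and each of the two terminal hexagons contributes five. Hence $D(L_n)=\mathcal{F}^{++}_n$ with base graph $G$ equal to two vertices $v,u$ joined by four parallel edges: the $e_i$ are the images of the shared hexagon edges, and the two edges added at $H_1$ and $H_n$ in forming $\mathcal{F}^{++}_n$ are the fifth boundary edges of the two terminal hexagons. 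It is worth stressing that this abstract dual multigraph ignores the kink structure of the chain: linear or angular annelation still gives one edge to each neighbour and four to $O$ at every middle hexagon, so all benzenoid chains on $n$ hexagons share the dual $\mathcal{F}^{++}_n$, and hence one Tutte polynomial, which is exactly what the two chains of Fig.~9 illustrate.

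With this identification Theorem~\ref{dual} gives $T(L_n;x,y)=T(\mathcal{F}^{++}_n;y,x)$, so I would apply Theorem~\ref{GFn+} with $x$ and $y$ interchanged. The base data are immediate from $T(C_k;x,y)=x^{k-1}+\dots+x+y$ and duality: writing $\hat T(\cdot)=T(\cdot\,;y,x)$ one gets $\hat T(G)=x^3+x^2+x+y$, $\hat T(G/\{v,u\})=x^4$ (four loops), and $\hat T(G^{++})=x^5+x^4+x^3+x^2+x+y=T(C_6)=T(L_1)$. Substituting these into the coefficients $A,B,C$ defined before Theorem~\ref{GFn} and interchanging $x$ and $y$ gives values I denote $A',B',C'$, which I expect to telescope to $A'=y$, $C'=\sum_{i=0}^{4}x^i$ and $C'-B'=x^4$. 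Then the eigenvalues $\lambda_{1,2}$ furnished by Theorem~\ref{GFn+} satisfy $\lambda_1+\lambda_2=A'+C'=\sum_{i=0}^{4}x^i+y=J$ and $\lambda_1\lambda_2=A'(C'-B')=x^4y$, i.e. they are the roots of $\lambda^2-J\lambda+x^4y=0$. Since a one-line expansion of $J^2$ gives $J^2-4x^4y=\Delta$, this is precisely $\lambda_{1,2}=\tfrac{J\pm\sqrt{\Delta}}{2}$.

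It remains to fit the initial data. The recurrence underlying Theorem~\ref{GFn+} is equivalent to $T(L_n)=J\,T(L_{n-1})-x^4y\,T(L_{n-2})$ for $n\ge 3$, whose general solution is $T(L_n)=A\lambda_1^{\,n}+B\lambda_2^{\,n}$. I would determine $A$ and $B$ from $T(L_1)=A\lambda_1+B\lambda_2$ and $T(L_2)=A\lambda_1^2+B\lambda_2^2$; solving this $2\times2$ system and using $\lambda_1-\lambda_2=\sqrt{\Delta}$ together with $(J\pm\sqrt{\Delta})\sqrt{\Delta}=\Delta\pm J\sqrt{\Delta}$ yields exactly the stated $A=\frac{2T(L_2)-T(L_1)(J-\sqrt{\Delta})}{\Delta+J\sqrt{\Delta}}$ and $B=\frac{2T(L_2)-T(L_1)(J+\sqrt{\Delta})}{\Delta-J\sqrt{\Delta}}$.

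I expect the main obstacle to be the first step: making the dual identification airtight and verifying the annelation-independence so that a single formula legitimately covers every chain in Fig.~9. After that the only genuine computation is the algebraic collapse $A'=y$, $C'=\sum_{i=0}^{4}x^i$, $C'-B'=x^4$, where a stray sign or coefficient would corrupt $\lambda_{1,2}$. A self-contained variant avoids duality entirely: applying the deletion--contraction formula directly to the two outer edges of a terminal hexagon of $L_n$ expresses $T(L_n)$ through $T(L_{n-1})$ and $T(L_{n-2})$ and produces the same recurrence $T(L_n)=J\,T(L_{n-1})-x^4y\,T(L_{n-2})$, after which the closed form follows as above.
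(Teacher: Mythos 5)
Your proposal is correct, and its key computations check out: with $G$ the two-vertex graph with four parallel edges, the dual identification $D(L_n)=\mathcal{F}^{++}_n$ holds (a middle hexagon contributes four edges to the outer-face vertex, each terminal hexagon five, and the abstract dual multigraph is indeed independent of kinks, which is why one formula covers both chains of Fig.~9); after the $x\leftrightarrow y$ swap from Theorem~\ref{dual} one gets exactly $A'=y$, $C'=\sum_{i=0}^{4}x^i$, $C'-B'=x^4$, hence $\lambda_1+\lambda_2=J$, $\lambda_1\lambda_2=x^4y$, $\Delta=J^2-4x^4y$, and solving the $2\times 2$ system at $n=1,2$ reproduces the stated $A$ and $B$. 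Be aware, though, that the paper itself offers no proof of this statement: it is imported verbatim from \cite{Fath-Tabar,Do}, where it is obtained by a direct deletion--contraction recursion on the chain itself --- essentially the ``self-contained variant'' you sketch at the end. What the paper derives with its own machinery is the neighboring Corollary~\ref{Ln}: the same dual identification and appeal to Theorem~\ref{GFn+}, but stopping at the forms (\ref{F+1})--(\ref{F+2}) rather than the $A\lambda_1^n+B\lambda_2^n$ form quoted here. So your proof does something the paper leaves implicit: by extracting from Theorem~\ref{GFn+} the second-order recurrence $T(L_n)=J\,T(L_{n-1})-x^4y\,T(L_{n-2})$ (valid for $n\ge 3$) and fitting $T(L_1)$, $T(L_2)$, you show the cited theorem is itself a consequence of the paper's fan-like machinery and that it agrees with Corollary~\ref{Ln}. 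The trade-off is clear: your route requires the duality step and the algebraic collapse of $A',B',C'$, but unifies the cited result with the paper's framework; the route of \cite{Fath-Tabar,Do} avoids duality entirely but is special to benzenoid chains.
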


As an application of Theorem \ref{GFn+}, we can also obtain $T(L_n)$ in another two forms.

\begin{corollary}\label{Ln}
Let $n\geq 1$ be an integer. Then
\begin{align*}
    T(L_n)=\left(\sum^{5}_{i=1}x^i+y\right)\frac{\lambda^n_1-\lambda^n_2}{\lambda_1-\lambda_2}-x^5y\frac{\lambda^{n-1}_1-\lambda^{n-1}_2}{\lambda_1-\lambda_2},
    \end{align*}
    where
\[
 \lambda_{1,2}=\frac{\sum^{4}_{i=0}x^i+y\pm\sqrt{4y\sum^{3}_{i=0}x^i+\left(\sum^{4}_{i=0}x^i-y\right)^2}}{2}
\]
	or
	\begin{align*}
		T(L_n)=&\left(\sum^{5}_{i=1}x^i+y\right)\sum_{j=0}^{\lfloor \frac{n-1}{2}\rfloor}
	(-x^4y)^{j}\binom{n-j-1}{j}\left(\sum^{4}_{i=0}x^i+y\right)^{n-2j-1} \\
	&-x^5y\sum_{j=0}^{\lfloor \frac{n-2}{2}\rfloor}
	(-x^4y)^{j}\binom{n-j-2}{j}\left(\sum^{4}_{i=0}x^i+y\right)^{n-2j-2}.
	\end{align*}
	
\end{corollary}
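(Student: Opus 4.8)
The plan is to compute $T(L_n)$ by passing to the planar dual and applying Theorem~\ref{GFn+}. First I would identify the dual graph: I claim that for every benzenoid chain $D(L_n)=\mathcal{F}^{++}_n$, where the base graph $G$ is the multigraph on two vertices $v,u$ joined by four parallel edges. The reasoning is that the $n$ hexagonal faces of $L_n$ form a path under edge-adjacency, so their dual vertices form a path; each interior hexagon shares exactly one edge with each of its two neighbours and hence has four edges on the single outer boundary, while each of the two terminal hexagons has five boundary edges. Dualizing, every hexagon-vertex is joined to the outer-face vertex $v$ by four (interior) or five (terminal) parallel edges, and to its chain-neighbours by a single edge; this is exactly $\mathcal{F}^{++}_n$ assembled from $G=$ four parallel edges, with the two extra edges at the ends being precisely the added ``$+$'' edges. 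Matching $|V|=n+1$ and $|E|=5n+1$ on both sides confirms the identification.

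Next I would record the three Tutte polynomials feeding Theorem~\ref{GFn+}. Since contracting one of several parallel edges turns the remaining ones into loops, the four--parallel--edge graph $G$ has $T(G)=x+y+y^2+y^3$; identifying $v$ and $u$ makes all four edges loops, so $T(G/\{v,u\})=y^4$; and $G^{++}$ (six parallel edges) has $T(G^{++})=x+\sum_{i=1}^{5}y^i$, consistent with the relation $T(G^{++})=T(G)+(y+1)T(G/\{v,u\})$ used earlier.

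Then I would substitute these into the definitions of $A,B,C$ in Theorem~\ref{GFn+}. The key mechanism is that every numerator is divisible by $xy-x-y$: one gets $A=x$, $B=\sum_{i=0}^{3}y^i$, and $C=\sum_{i=0}^{4}y^i$; for instance $(xy-y-1)y^4-(x+y+y^2+y^3)=\left(\sum_{i=0}^{4}y^i\right)(xy-x-y)$. Assembling the ingredients gives $A+C=\sum_{i=0}^{4}y^i+x$, $(A-C)^2+4AB=\bigl(x-\sum_{i=0}^{4}y^i\bigr)^2+4x\sum_{i=0}^{3}y^i$, leading coefficient $T(G^{++})=x+\sum_{i=1}^{5}y^i$, trailing coefficient $-yA\,T(G/\{v,u\})=-xy^5$, and $AB-AC=A(B-C)=-xy^4$. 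Feeding these into \eqref{F+1} and \eqref{F+2} produces closed forms for $T(\mathcal{F}^{++}_n;x,y)$.

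Finally I would invoke Theorem~\ref{dual}: $T(L_n;x,y)=T(D(L_n);y,x)=T(\mathcal{F}^{++}_n;y,x)$, i.e.\ swap $x\leftrightarrow y$. Under this swap $A+C\mapsto\sum_{i=0}^{4}x^i+y$, the radicand becomes $4y\sum_{i=0}^{3}x^i+\bigl(\sum_{i=0}^{4}x^i-y\bigr)^2$, the leading coefficient becomes $\sum_{i=1}^{5}x^i+y$, $-xy^5\mapsto-x^5y$, and $AB-AC\mapsto-x^4y$, which are exactly the data in the statement; thus \eqref{F+1} and \eqref{F+2} give the two displayed expressions for $T(L_n)$. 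A reassuring check is $n=1$, where the formulas collapse to $T(L_1)=\sum_{i=1}^{5}x^i+y=T(C_6)$. I expect the only non-routine step to be the first one, namely verifying that the dual of \emph{every} benzenoid chain, independent of its bends, is this single multigraph $\mathcal{F}^{++}_n$ with $G$ equal to four parallel edges; once this shape-independence is established, the rest is the clean simplification of $A,B,C$ and the variable swap.
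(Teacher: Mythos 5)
Your proposal is correct and is exactly the argument the paper intends: identify $D(L_n)=\mathcal{F}^{++}_n$ with $G$ the four-parallel-edge multigraph, compute $A=x$, $B=\sum_{i=0}^{3}y^i$, $C=\sum_{i=0}^{4}y^i$, apply Theorem~\ref{GFn+}, and swap $x\leftrightarrow y$ via Theorem~\ref{dual}. All of your intermediate computations (the divisibility by $xy-x-y$, the coefficients $T(G^{++})=x+\sum_{i=1}^{5}y^i$ and $-yAT(G/\{v,u\})=-xy^5$, and $AB-AC=-xy^4$) check out, so nothing is missing.
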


\subsection{Pyrene chains}

Let $R_n$ denote the pyrene chain as shown in Fig. 10. In \cite{Ohkami}, the recursive relation of the sextet polynomial for several classes of benzenoid systems including pyrene chains was obtained. Recently, zeros of sextet polynomials for pyrene chains were analyzed in \cite{Li} and the forcing and anti-forcing polynomials of perfect matchings of pyrene chains were studied in \cite{Deng}.
\begin{center}
\includegraphics{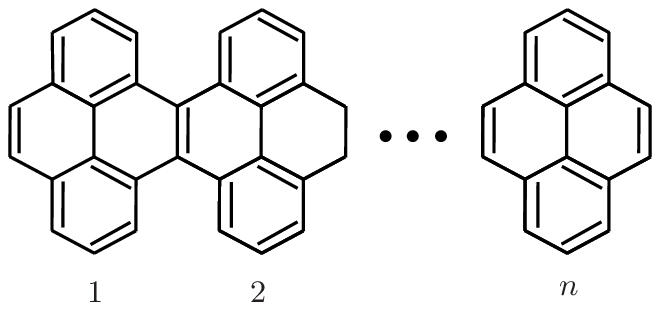}
\put(-170,-19){\mbox{\scriptsize\quad Fig. 10. The pyrene chain $R_n$.}}
\end{center}

Note $D(R_n)=^{+}\mathcal{G}^{+}_n$ when $G$ is the graph as shown in Fig. 11 (the label near the edge denotes the number of parallel edges).
\begin{center}
\includegraphics{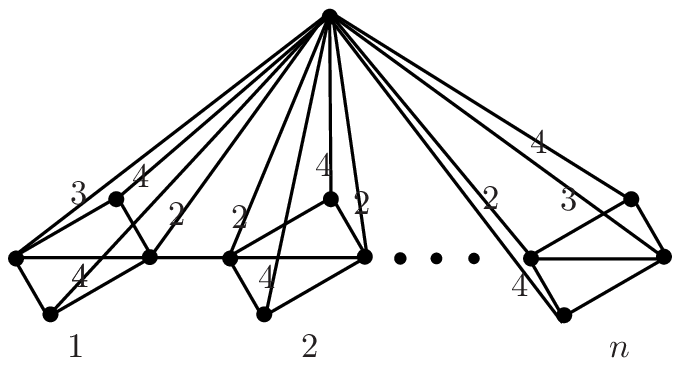}
\hspace{2cm}
\includegraphics{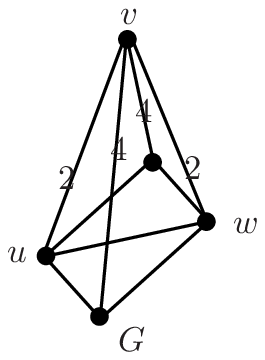}
\put(-280,-19){\mbox{\scriptsize\quad Fig. 11. The dual of $R_n$ and the corresponding $G$.}}
\end{center}

As an application of Theorem \ref{GGn+}, we have

\begin{corollary}\label{Rn}
Let $n\geq 1$ be an integer. Then $$T(R_n)=(I+2J+K)\frac{\lambda^n_1-\lambda^n_2}{\lambda_1-\lambda_2}+(C(I+J)-A(I+2J+K))\frac{\lambda^{n-1}_1-\lambda^{n-1}_2}{\lambda_1-\lambda_2},$$
	or
	\begin{align*}
	&T(R_n)=(I+2J+K)\sum_{j=0}^{\lfloor \frac{n-1}{2}\rfloor} \binom{n-j-1}{j}(A+D)^{n-2j-1}(BC-AD)^{j}\\
&+(C(I+J)-A(I+2J+K))\sum_{j=0}^{\lfloor \frac{n-2}{2}\rfloor}
	\binom{n-j-2}{j}(A+D)^{n-2j-2}(BC-AD)^{j},
	\end{align*}
	where
	\begin{align*}
	A=&\frac{y((x-1)I-J)}{xy-x-y},\quad C=\frac{y((x-1)(I+J)-(J+K))}{xy-x-y},\\
    B=&\frac{(y-1)J-I}{xy-x-y},\qquad D=\frac{(y-1)(J+K)-(I+J)}{xy-x-y},
	\end{align*}
$I=x^{13}+4x^{12}+10x^{11}+20x^{10}+2x^9y+33x^9+8x^8y+46x^8+18x^7y+56x^7+x^6y^2+31x^6y+60x^6+6x^5y^2+42x^5y+56x^5+11x^4y^2+49x^4y+44x^4+2x^3y^3+17x^3y^2+44x^3y+29x^3+2x^2y^3+17x^2y^2+34x^2y+15x^2+4xy^3+17xy^2+19xy+4x+y^4+5y^3+8y^2+4y$,\\
$J=x^2(x^{12}+3x^{11}+6x^{10}+10x^9+x^8y+14x^8+4x^7y+16x^7+7x^6y+16x^6+10x^5y+14x^5+2x^4y^2+11x^4y+10x^4+2x^3y^2+10x^3y+6x^3+3x^2y^2+7x^2y+3x^2+3xy^2+4xy+x+y^3+2y^2+y)$,\\
$K=x^5(x^5+x^4+x^3+x^2+x+y)^2$,\\
	and $$\lambda_{1,2}=\frac{A+D\pm\sqrt{(A-D)^2+4BC}}{2}.$$
\end{corollary}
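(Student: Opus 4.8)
The plan is to reduce the corollary entirely to the already-proved Theorem \ref{GGn+} by way of planar duality, leaving only one genuinely computational step: evaluating the Tutte polynomials of the small multigraph $G$ drawn in Fig. 11. First I would invoke the observation recorded just before the statement, namely that the planar dual of the pyrene chain satisfies $D(R_n)={}^{+}\mathcal{G}^{+}_n$ for that particular $G$. Combining this with Theorem \ref{dual} gives $T(R_n;x,y)=T({}^{+}\mathcal{G}^{+}_n;y,x)$, so it suffices to take the closed form of $T({}^{+}\mathcal{G}^{+}_n)$ from Theorem \ref{GGn+} and interchange $x$ and $y$ everywhere. Because the denominator $xy-x-y$ is symmetric, this swap carries the parameters $A,B,C,D$ and the eigenvalues $\lambda_{1,2}$ of Theorem \ref{GGn+} to precisely the $A,B,C,D,\lambda_{1,2}$ printed in the corollary, once the building-block quantities are matched. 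Concretely I would set $I=T(G;y,x)$, $J=T(G/\{v,u\};y,x)$ and $K=T(G/\{v,u,w\};y,x)$.

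The second step is to rewrite every Tutte polynomial occurring in Theorem \ref{GGn+} in terms of $I,J,K$. Applying deletion-contraction to the single edge that creates each ``$+$'' does this cleanly: adding the $v$-$u$ edge gives $T({}^{+}G)=T(G)+T(G/\{v,u\})$, and adding the further $v$-$w$ edge gives, after a second deletion-contraction, $T({}^{+}G^{+})=T(G)+T(G/\{v,u\})+T(G/\{v,w\})+T(G/\{v,u,w\})$; similarly the $G^{+}$ appearing in the parameters $C,D$ (which here means $G$ plus the $v$-$w$ edge) satisfies $T(G^{+})=T(G)+T(G/\{v,w\})$ and $T(G^{+}/\{v,u\})=T(G/\{v,u\})+T(G/\{v,u,w\})$. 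At this point I would use the symmetry of $G$ that exchanges $u$ and $w$ while fixing $v$, which forces $T(G/\{v,w\})=T(G/\{v,u\})$ and collapses these to $T({}^{+}G;y,x)=I+J$, $T({}^{+}G^{+};y,x)=I+2J+K$, $T(G^{+};y,x)=I+J$ and $T(G^{+}/\{v,u\};y,x)=J+K$. Substituting into the swapped $A,B,C,D$ reproduces the four displayed parameters, and substituting $T({}^{+}G^{+};y,x)=I+2J+K$ together with $CT({}^{+}G;y,x)-AT({}^{+}G^{+};y,x)=C(I+J)-A(I+2J+K)$ into the swapped main formula yields both claimed expressions for $T(R_n)$, the first via Lemma \ref{gflambda} and the second via Lemma \ref{gf}, exactly as in the proof of Theorem \ref{GGn+}.

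The main obstacle is the remaining bookkeeping: computing the explicit polynomials $I,J,K$ for the multigraph $G$ of Fig. 11, whose edges carry the indicated multiplicities and which produce the high-degree expressions (up to $x^{13}$ in $I$). I would obtain them by repeatedly applying the parallel-edge rule together with deletion-contraction, reducing $G$ and its contractions $G/\{v,u\}$ and $G/\{v,u,w\}$ to series-parallel pieces whose Tutte polynomials can be written down directly; the identity $T(G/\{v,w\})=T(G/\{v,u\})$ coming from the $u\leftrightarrow w$ symmetry halves this work. No conceptual difficulty arises here, but the algebra is lengthy and is the only place requiring genuine care; once $I,J,K$ are in hand the corollary follows by pure substitution into Theorem \ref{GGn+} with $x$ and $y$ interchanged.
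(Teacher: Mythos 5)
Your proposal is correct and follows essentially the same route as the paper: the paper's own ``proof'' consists precisely of noting $D(R_n)={}^{+}\mathcal{G}^{+}_n$ for the $G$ of Fig.~11 and invoking Theorem~\ref{GGn+} through the duality relation of Theorem~\ref{dual}, i.e.\ your $x\leftrightarrow y$ swap with $I=T(G;y,x)$, $J=T(G/\{v,u\};y,x)$, $K=T(G/\{v,u,w\};y,x)$. The details you supply --- the deletion-contraction expansions $T({}^{+}G)=T(G)+T(G/\{v,u\})$ and $T({}^{+}G^{+})=T(G)+T(G/\{v,u\})+T(G/\{v,w\})+T(G/\{v,u,w\})$, the $u\leftrightarrow w$ symmetry forcing $T(G/\{v,w\})=T(G/\{v,u\})$ (needed to get $I+J$, $J+K$ and $I+2J+K$), and the explicit reduction of the multigraph $G$ to obtain $I,J,K$ --- are exactly the bookkeeping the paper leaves implicit, and they check out (e.g.\ $I+2J+K$ at $(x,y)=(1,1)$ gives $720+336+36=1092=\tau(R_1)$, matching Appendix~3).
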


\subsection{Triphenylene chains}

Let $T_n$ denote the triphenylene chain as shown in Fig. 12. Note that $D(T_n)=^{+}\mathcal{G}^{+}_n$ when $G$ is the graph as shown in Fig. 13.
\begin{center}
\includegraphics{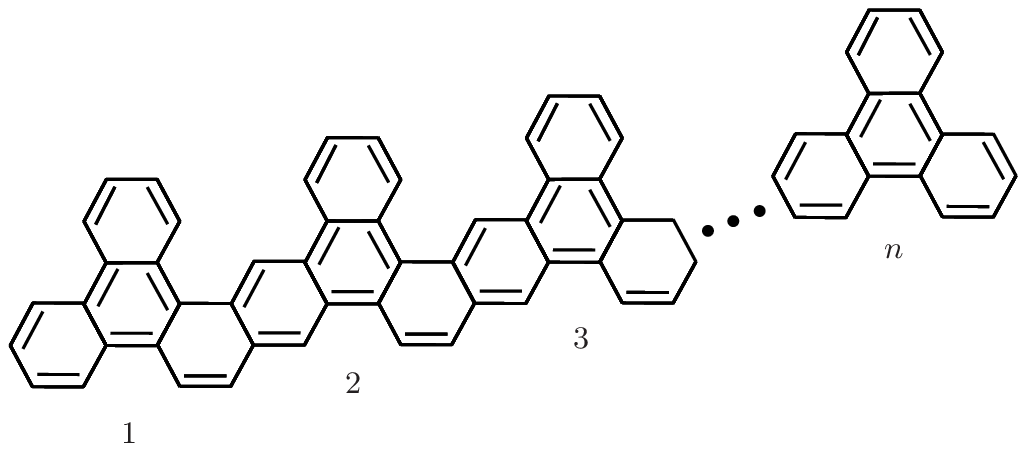}
\put(-220,-19){\mbox{\scriptsize\quad Fig. 12. The triphenylene chain $T_n$.}}
\end{center}
\begin{center}
\includegraphics{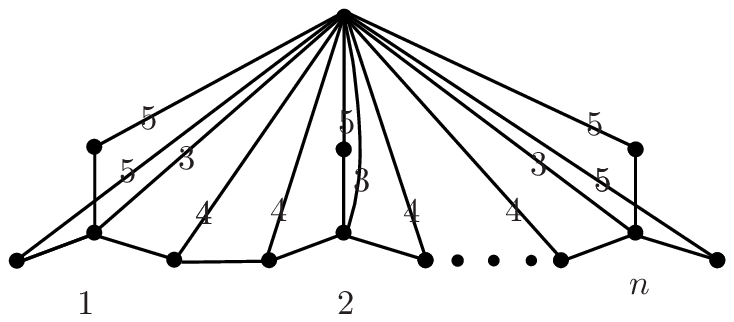}
\hspace{2cm}
\includegraphics{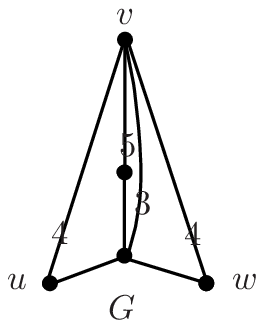}
\put(-280,-19){\mbox{\scriptsize\quad Fig. 13. The dual of $T_n$ and the corresponding $G$}}
\end{center}

As another application of Theorem \ref{GGn+}, we have

\begin{corollary}\label{Tn}
Let $n\geq 1$ be an integer. Then $$T(T_n)=(I+2J+K)\frac{\lambda^n_1-\lambda^n_2}{\lambda_1-\lambda_2}+(C(I+J)-A(I+2J+K))\frac{\lambda^{n-1}_1-\lambda^{n-1}_2}{\lambda_1-\lambda_2},$$
	or
	\begin{align*}
	&T(T_n)=(I+2J+K)\sum_{j=0}^{\lfloor \frac{n-1}{2}\rfloor}	\binom{n-j-1}{j}(A+D)^{n-2j-1}(BC-AD)^{j}\\
&+(C(I+J)-A(I+2J+K))\sum_{j=0}^{\lfloor \frac{n-2}{2}\rfloor}
	\binom{n-j-2}{j}(A+D)^{n-2j-2}(BC-AD)^{j},
	\end{align*}
	where
	\begin{align*}
	A=&\frac{y((x-1)I-J)}{xy-x-y},\quad C=\frac{y((x-1)(I+J)-(J+K))}{xy-x-y},\\
    B=&\frac{(y-1)J-I}{xy-x-y},\qquad D=\frac{(y-1)(J+K)-(I+J)}{xy-x-y},
	\end{align*}
$I=y^4+y^3x^4+3y^3x^3+4y^3x^2+4y^3x+3y^3+3y^2x^7+9y^2x^6+15y^2x^5+20y^2x^4+ 21y^2x^3+15y^2x^2+9y^2x+3y^2+2yx^{11}+8yx^{10}+18yx^9+32yx^8+46yx^7+53yx^6+52yx^5+ 43yx^4+28yx^3+15yx^2+6yx+y+x^{15}+4x^{14}+10x^{13}+20x^{12}+33x^{11}+46x^{10}+56x^9+60x^8+ 56x^7+46x^6+33x^5+20x^4+10x^3+4x^2+x$,\\
$J=x^4(y^3+y^2x^4+3y^2x^3+3y^2x^2+3y^2x+2y^2+yx^8+4yx^7+7yx^6+10yx^5+12yx^4+10yx^3+ 7yx^2+4yx+y+x^{12}+3x^{11}+6x^{10}+10x^9+14x^8+16x^7+16x^6+14x^5+10x^4+6x^3+3x^2+x)$,\\
$K=x^8(y+x+(x^4+x^3+x^2+x+y)^2+x^2+x^3+x^4+x^5+x^6+x^7+x^8+x^9)$,\\
	and $$\lambda_{1,2}=\frac{A+D\pm\sqrt{(A-D)^2+4BC}}{2}.$$
\end{corollary}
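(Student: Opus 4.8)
The plan is to imitate the pyrene-chain argument of Corollary \ref{Rn}, reducing the whole statement to the abstract formula of Theorem \ref{GGn+} by way of planar duality. First I would invoke Theorem \ref{dual}: the triphenylene chain $T_n$ is a plane graph whose dual is $D(T_n)={}^{+}\mathcal{G}^{+}_n$, assembled from the base graph $G$ of Fig. 13, so
\begin{align*}
T(T_n; x, y)=T\big(D(T_n); y, x\big)=T\big({}^{+}\mathcal{G}^{+}_n; y, x\big).
\end{align*}
Hence it suffices to evaluate the closed form of Theorem \ref{GGn+} for this particular $G$ and then interchange $x$ and $y$. The scalars $A,B,C,D$ of Theorem \ref{GGn+} all carry the denominator $xy-x-y$, which is invariant under $x\leftrightarrow y$, so the swap acts only on the numerators, replacing the prefactor $x$ by $y$ and every factor $(y-1)$ by $(x-1)$; this is exactly the shape of the constants $A,B,C,D$ recorded in the statement, and $\lambda_{1,2}$ transforms into the asserted $\lambda_{1,2}$.

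The second step is to match the building blocks. Comparing the two displayed lines of Theorem \ref{GGn+} against the assertion, I would put
\begin{align*}
I=T(G; y, x),\qquad J=T\big(G/\{v,u\}; y, x\big),\qquad J+K=T\big(G^{+}/\{v,u\}; y, x\big),
\end{align*}
whence $T(G^{+}; y, x)=T(G; y,x)+T(G/\{v,u\}; y,x)=I+J$ by deletion--contraction on the extra edge of $G^{+}$ (it is neither a bridge nor a loop since $G$ already joins $v$ and $u$), and similarly $T({}^{+}G; y, x)=I+J$ and $T({}^{+}G^{+}; y, x)=I+2J+K$. Substituting these identities into Theorem \ref{GGn+} reproduces the coefficients $I+2J+K$ and $C(I+J)-A(I+2J+K)$ in front of the two $(\lambda_1,\lambda_2)$-terms, while the Lemma \ref{gf} route yields the binomial-sum form verbatim. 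This step introduces no new idea; it is bookkeeping against the already-established template.

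The only genuinely laborious step, and the one I expect to be the main obstacle, is computing the concrete Tutte polynomials $T(G; y,x)$, $T(G/\{v,u\}; y,x)$ and $T(G^{+}/\{v,u\}; y,x)$ for the specific base graph $G$ of Fig. 13, whose edges carry the multiplicities indicated there. I would reduce each of these by repeated deletion--contraction, collapsing the parallel-edge bundles step by step and factoring the graph at its cut vertices through the one-vertex product rule of Theorem \ref{GH} and the two-vertex splitting rule of Lemma \ref{H1H2}; care is required because a bundle edge may act as a bridge, a loop, or an ordinary edge depending on the stage of the reduction, which is what makes the bookkeeping heavy and the resulting polynomials $I$, $J$, $K$ so bulky. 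In particular, the perfect square $(x^4+x^3+x^2+x+y)^2$ visible inside $K$ hints that a portion of $G^{+}/\{v,u\}$ separates at a cut vertex into two isomorphic blocks, so Theorem \ref{GH} applies there and roughly halves the work. Once the three base polynomials are in hand and identified with $I$, $J$, $K$, assembling them with $A,B,C,D$ and with $\lambda_{1,2}$ as in Theorem \ref{GGn+} completes the proof.
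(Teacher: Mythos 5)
Your proposal is correct and is essentially the paper's own (implicit) argument: planar duality (Theorem \ref{dual}) turns $T(T_n;x,y)$ into $T({}^{+}\mathcal{G}^{+}_n;y,x)$ for the Fig.~13 base graph, and Theorem \ref{GGn+} with $x$ and $y$ interchanged then yields both displayed forms once $I$, $J$, $K$ are identified with the base Tutte polynomials, the remaining work being the (purely computational) evaluation of those three polynomials. One imprecision is worth fixing: in Theorem \ref{GGn+} the graph $G^{+}$ carries the extra edge $vw$, not $vu$, so deletion--contraction gives $T(G^{+};y,x)=I+T(G/\{v,w\};y,x)$ rather than $I+J$ directly; your identity $T(G^{+};y,x)=I+J$ is still valid, but only because the Fig.~13 graph has an automorphism fixing $v$ and exchanging $u$ with $w$, whence $T(G/\{v,w\})=T(G/\{v,u\})=J$ --- this symmetry is exactly what lets the corollary express all four constants $A,B,C,D$ through only three polynomials, and it should be stated explicitly in the proof.
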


\subsection{Number of spanning trees}

We denoted by $\tau(G)=T(G; 1, 1)$ the number of spanning trees of a graph $G$. By Corollary \ref{Ln}, we have
\begin{corollary}\emph{\cite{Farrell, Gutman}}
	Let $n\geq 1$ be an integer. Then
$$\tau(L_{n})=\frac{4+3\sqrt{2}}{8}(3+2\sqrt{2})^{n}+\frac{4 -3\sqrt{2}}{8}(3-2\sqrt{2})^{n}.$$
\end{corollary}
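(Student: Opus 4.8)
The plan is to specialize the closed form for $T(L_n)$ from Corollary \ref{Ln} at the point $(x,y)=(1,1)$, since by definition $\tau(L_n)=T(L_n;1,1)$. First I would evaluate each ingredient of that formula at $x=y=1$: the coefficient $\sum_{i=1}^{5}x^i+y$ becomes $5+1=6$, the coefficient $x^5y$ becomes $1$, and inside the radical the quantity $4y\sum_{i=0}^{3}x^i+\bigl(\sum_{i=0}^{4}x^i-y\bigr)^2$ becomes $16+(5-1)^2=32$. Hence the two characteristic roots collapse to $\lambda_{1,2}=\frac{6\pm\sqrt{32}}{2}=3\pm2\sqrt2$, giving $\lambda_1-\lambda_2=4\sqrt2$ and, crucially, $\lambda_1\lambda_2=9-8=1$.

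Substituting these values, Corollary \ref{Ln} yields
\[
\tau(L_n)=6\,\frac{\lambda_1^n-\lambda_2^n}{\lambda_1-\lambda_2}-\frac{\lambda_1^{n-1}-\lambda_2^{n-1}}{\lambda_1-\lambda_2}.
\]
The next step is to rewrite the second term using $\lambda_1\lambda_2=1$, which gives $\lambda_1^{-1}=\lambda_2$ and $\lambda_2^{-1}=\lambda_1$, so that $\lambda_1^{n-1}=\lambda_2\lambda_1^{n}$ and $\lambda_2^{n-1}=\lambda_1\lambda_2^{n}$. Collecting the coefficients of $\lambda_1^n$ and $\lambda_2^n$ then turns the expression into
\[
\tau(L_n)=\frac{6-\lambda_2}{\lambda_1-\lambda_2}\,\lambda_1^n+\frac{\lambda_1-6}{\lambda_1-\lambda_2}\,\lambda_2^n.
\]

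Finally I would simplify the two coefficients. Using $6-\lambda_2=3+2\sqrt2$ and $\lambda_1-\lambda_2=4\sqrt2$, rationalizing gives $\frac{3+2\sqrt2}{4\sqrt2}=\frac{4+3\sqrt2}{8}$; similarly $\frac{\lambda_1-6}{\lambda_1-\lambda_2}=\frac{2\sqrt2-3}{4\sqrt2}=\frac{4-3\sqrt2}{8}$. Together with $\lambda_1=3+2\sqrt2$ and $\lambda_2=3-2\sqrt2$ this is exactly the claimed formula. There is no genuine obstacle here, since the argument is a direct evaluation; the only point demanding care is the bookkeeping when converting the term $\frac{\lambda_1^{n-1}-\lambda_2^{n-1}}{\lambda_1-\lambda_2}$ into the shape $\alpha\lambda_1^n+\beta\lambda_2^n$. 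The identity $\lambda_1\lambda_2=1$ is what makes this conversion clean, and I would double-check the two rationalizations so that the halves $\tfrac{4\pm3\sqrt2}{8}$ come out with the correct signs.
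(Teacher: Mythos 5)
Your proposal is correct and follows exactly the paper's route: the paper derives this corollary simply by evaluating Corollary \ref{Ln} at $(x,y)=(1,1)$, which is precisely your computation (with $\lambda_{1,2}=3\pm2\sqrt{2}$, $\lambda_1\lambda_2=1$, and the rationalized coefficients $\tfrac{4\pm3\sqrt{2}}{8}$). Your write-up just makes explicit the bookkeeping that the paper leaves to the reader.
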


By Corollaries \ref{Rn} and \ref{Tn}, we have
\begin{corollary}\label{ttRn}
Let $n\geq 1$ be an integer. Then
\begin{eqnarray*}
& &\tau(R_n)=\frac{240+47\sqrt{30}}{480}(528+96\sqrt{30})^{n}+\frac{240-47\sqrt{30}}{480}(528-96\sqrt{30})^{n},\\
& &\tau(T_n)=\frac{1329265+1223\sqrt{1329265}}{2658530}\left(\frac{1153+\sqrt{1329265}}{2}\right )^{n}+\\
& &\ \ \ \ \ \ \ \ \ \ \ \frac{1329265-1223\sqrt{1329265}}{2658530}\left(\frac{1153-\sqrt{1329265}}{2}\right )^{n}.
\end{eqnarray*}
\end{corollary}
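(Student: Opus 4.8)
The plan is to specialize the closed forms for $T(R_n)$ and $T(T_n)$ obtained in Corollaries \ref{Rn} and \ref{Tn} at the point $(x,y)=(1,1)$, since by definition $\tau(G)=T(G;1,1)$. All of the structural work is already done, so what remains is an exact evaluation followed by an algebraic simplification.

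First I would evaluate the polynomials $I$, $J$, $K$ at $x=y=1$, which simply amounts to summing their coefficients. For the pyrene chain this gives $I=720$, $J=168$, and, since $K=x^5(x^5+x^4+x^3+x^2+x+y)^2$, the value $K=1\cdot 6^2=36$; for the triphenylene chain one obtains $I=815$, $J=169$, $K=35$. This is pure bookkeeping, but because $I$ and $J$ each have dozens of terms it is the step most prone to arithmetic error, so I regard it as the main obstacle.

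Next I would evaluate $A,B,C,D$ at $(1,1)$. The crucial simplification is that the common denominator $xy-x-y$ equals $-1$ there, while in each numerator one of the two summands carries a factor $(x-1)$ or $(y-1)$ that vanishes. Hence for both chains one gets the clean values $A=J$, $B=I$, $C=J+K$, $D=I+J$ (all evaluated at $(1,1)$): namely $A=168,\ B=720,\ C=204,\ D=888$ for pyrene and $A=169,\ B=815,\ C=204,\ D=984$ for triphenylene. From these I would compute, using Lemma \ref{gflambda} (so that $\lambda_1+\lambda_2=A+D$ and $\lambda_1\lambda_2=AD-BC$), the trace $A+D$, the product $AD-BC$, and the discriminant $(A-D)^2+4BC$. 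For pyrene this yields $A+D=1056$, $AD-BC=2304$, and discriminant $1105920=192^2\cdot 30$, so $\lambda_{1,2}=528\pm 96\sqrt{30}$; for triphenylene it yields $A+D=1153$, $AD-BC=36$, and discriminant $1329265$, so $\lambda_{1,2}=\frac{1153\pm\sqrt{1329265}}{2}$. These are exactly the bases appearing in the statement.

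Finally I would convert the two-term expression into the form $P\lambda_1^{n}+Q\lambda_2^{n}$. Writing $M=I+2J+K$ (so $M=1092$ for pyrene and $M=1188$ for triphenylene), I would first observe that the coefficient of the second term satisfies $C(I+J)-A(I+2J+K)=KI-J^2=-(AD-BC)=-\lambda_1\lambda_2$ at $(1,1)$, which follows directly from $A=J$ and $C=J+K$. Eliminating the $(n-1)$-st powers through $\lambda_1\lambda_2(\lambda_1^{n-1}-\lambda_2^{n-1})=\lambda_1^{n}\lambda_2-\lambda_1\lambda_2^{n}$ then collapses the expression to
\[
\tau=\frac{\lambda_1^{n}(M-\lambda_2)-\lambda_2^{n}(M-\lambda_1)}{\lambda_1-\lambda_2}.
\]
Substituting the values of $M$ and $\lambda_{1,2}$ and rationalizing the denominator $\lambda_1-\lambda_2$ produces the coefficients $\frac{240\pm 47\sqrt{30}}{480}$ for $\tau(R_n)$ and $\frac{1329265\pm 1223\sqrt{1329265}}{2658530}$ for $\tau(T_n)$, which is the claimed result.
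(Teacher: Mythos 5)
Your proposal is correct and takes essentially the same route as the paper: the paper states this corollary as an immediate consequence of Corollaries \ref{Rn} and \ref{Tn} specialized at $(x,y)=(1,1)$, with all computational details left implicit. Your evaluation ($I=720$, $J=168$, $K=36$ for pyrene and $I=815$, $J=169$, $K=35$ for triphenylene, giving $\lambda_{1,2}=528\pm 96\sqrt{30}$ and $\lambda_{1,2}=\tfrac{1153\pm\sqrt{1329265}}{2}$ respectively) and the collapse of the two-term expression into $P\lambda_1^{n}+Q\lambda_2^{n}$ is exactly that omitted computation, carried out correctly.
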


\section{Discussions}

In this paper we mainly obtain expressions of Tutte polynomial of two families of fan-like graphs. As applications, the Tutte polynomials, in particular the number of spanning trees, of several families of recursive benzenoid systems such as pyrene chains and triphenylene chains are obtained, which as far as we know are both unknown. Some numerical results are listed in the Appendixes 1-3. It is interesting to find that $\tau(T_n)$ is greater than $\tau(R_n)$, and it seems that the difference becomes bigger and bigger when $n$ increases. We also note that dual graphs of some corona-condensed hexagonal systems are bipyramid-like graphs. For example, the Kekulene as shown in Fig. 14 and the primitive coronoid in \cite{Zhang}.  The Tutte polynomials of cones over a graph $G$, i.e. $K_1\vee G$ and bipyramids $\overline{K_2}\vee C_n$ was studied long long ago, see \cite{Biggsb}. In order to compute the Tutte polynomials of corona-condensed hexagonal systems, the computation of the Tutte polynomials of bipyramid-like graphs is deserved for further study.

\begin{center}
\includegraphics{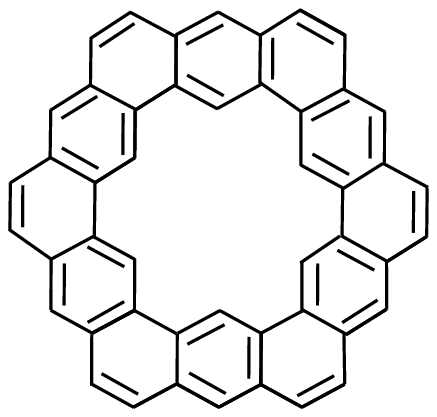}
\put(-235,-19){\mbox{\scriptsize\quad Fig. 14. Kekulene.}}
\end{center}

\section*{References}


\newpage
\textbf{Appendix 1.} Tutte polynomials of pyrene chains.

\begin{adjustwidth}{-1cm}{-1cm}
\centering
\begin{tabular}{|p{.1\textwidth}| p{.8\textwidth}| m{.7\textwidth}|}
\hline

&\qquad\qquad\qquad\qquad\qquad\qquad\quad $T(R_n,x,y)$ \\
\hline

$n=1$ &
\scriptsize{$x^{15} + 4x^{14} + 10x^{13} + 20x^{12} + 35x^{11} + 4x^{10}y + 52x^{10} + 12x^9y + 68x^9 + 24x^8y + 80x^8 + 40x^7y + 85x^7 + 5x^6y^2 + 55x^6y + 80x^6 + 11x^5y^2 + 62x^5y + 68x^5 + 17x^4y^2 + 63x^4y + 50x^4 + 2x^3y^3 + 23x^3y^2 + 52x^3y + 31x^3 + 4x^2y^3 + 21x^2y^2 + 36x^2y + 15x^2 + 4xy^3 + 17xy^2 + 19xy + 4x + y^4 + 5y^3 + 8y^2 + 4y$ }\\
\hline

$n=2$ &
\scriptsize{$x^{29} + 8x^{28} + 36x^{27} + 120x^{26} + 330x^{25} + 8x^{24}y + 784x^{24} + 56x^{23}y + 1652x^{23}+ 224x^{22}y + 3144x^{22} + 672x^{21}y + 5475x^{21} + 11x^{20}y^2 + 1669x^{20}y + 8800x^{20} + 83x^{19}y^2 + 3574x^{19}y + 13140x^{19} + 333x^{18}y^2 + 6773x^{18}y + 18316x^{18} + 4x^{17}y^3 + 973x^{17}y^2 + 11560x^{17}y + 23916x^{17} + 30x^{16}y^3 + 2316x^{16}y^2 + 17976x^{16}y + 29318x^{16} + 144x^{15}y^3 + 4698x^{15}y^2 + 25660x^{15}y + 33788x^{15} + 2x^{14}y^4 + 470x^{14}y^3 + 8336x^{14}y^2 + 33812x^{14}y + 36624x^{14} + 12x^{13}y^4 + 1180x^{13}y^3 + 13188x^{13}y^2 + 41266x^{13}y + 37323x^{13} + 59x^{12}y^4 + 2455x^{12}y^3 + 18807x^{12}y^2 + 46729x^{12}y + 35714x^{12} + 202x^{11}y^4 + 4360x^{11}y^3 + 24348x^{11}y^2 + 49112x^{11}y + 32016x^{11} + 2x^{10}y^5 + 512x^{10}y^4 + 6768x^{10}y^3 + 28732x^{10}y^2 + 47842x^{10}y + 26796x^{10} + 24x^9y^5 + 1049x^9y^4 + 9282x^9y^3 + 30924x^9y^2 + 43076x^9y + 20838x^9 + 80x^8y^5 + 1769x^8y^4 + 11319x^8y^3 + 30333x^8y^2 + 35685x^8y + 14950x^8 + 188x^7y^5 + 2556x^7y^4 + 12332x^7y^3 + 27002x^7y^2 + 26990x^7y + 9796x^7 + 10x^6y^6 + 362x^6y^5 + 3190x^6y^4 + 11938x^6y^3 + 21626x^6y^2 + 18434x^6y + 5780x^6 + 28x^5y^6 + 532x^5y^5 + 3409x^5y^4 + 10194x^5y^3 + 15404x^5y^2 + 11188x^5y + 3005x^5 + 53x^4y^6 + 655x^4y^5 + 3151x^4y^4 + 7583x^4y^3 + 9556x^4y^2 + 5876x^4y + 1330x^4 + 4x^3y^7 + 89x^3y^6 + 670x^3y^5 + 2438x^3y^4 + 4758x^3y^3 + 4990x^3y^2 + 2562x^3y + 473x^3 + 6x^2y^7 + 91x^2y^6 + 525x^2y^5 + 1526x^2y^4 + 2420x^2y^3 + 2071x^2y^2 + 857x^2y + 120x^2 + 8xy^7 + 81xy^6 + 336xy^5 + 733xy^4 + 894xy^3 + 592xy^2 + 184xy + 16x + y^8 + 11y^7 + 51y^6 + 129y^5 + 192y^4 + 168y^3 + 80y^2 + 16y$} \\
\hline
\end{tabular}
\end{adjustwidth}

\vspace{0.5cm}
\textbf{Appendix 2.} Tutte polynomials of triphenylene chains.
\begin{adjustwidth}{-1cm}{-1cm}
\centering
\begin{tabular}{|p{.1\textwidth}| p{.8\textwidth}| m{.7\textwidth}|}
\hline

&\qquad\qquad\qquad\qquad\qquad\qquad\quad $T(T_n,x,y)$ \\
\hline
$n=1$ &
\scriptsize {$x^{17} + 4x^{16} + 10x^{15} + 20x^{14} + 35x^{13} + 4x^{12}y + 52x^{12} + 12x^{11}y + 68x^{11} + 24x^{10}y + 80x^{10} + 40x^9y + 85x^9 + 3x^8y^2 + 57x^8y + 80x^8 + 9x^7y^2 + 66x^7y + 68x^7 + 15x^6y^2 + 67x^6y + 52x^6 + 21x^5y^2 + 60x^5y + 35x^5 + 3x^4y^3 + 24x^4y^2 + 45x^4y + 20x^4 + 3x^3y^3 + 21x^3y^2 + 28x^3y + 10x^3 + 4x^2y^3 + 15x^2y^2 + 15x^2y + 4x^2 + 4xy^3 + 9xy^2 + 6xy + x + y^4 + 3y^3 + 3y^2 + y$ }\\
\hline

$n=2$ &
\scriptsize {$x^{33} + 8x^{32} + 36x^{31} + 120x^{30} + 330x^{29} + 8x^{28}y + 784x^{28} + 56x^{27}y + 1652x^{27}+ 224x^{26}y + 3144x^{26} + 672x^{25}y + 5475x^{25} + 7x^{24}y^2 + 1673x^{24}y + 8800x^{24} + 63x^{23}y^2 + 3598x^{23}y + 13140x^{23} + 273x^{22}y^2 + 6853x^{22}y + 18320x^{22} + 833x^{21}y^2 + 11768x^{21}y + 23940x^{21} + 8x^{20}y^3 + 2050x^{20}y^2 + 18438x^{20}y + 29400x^{20} + 66x^{19}y^3 + 4294x^{19}y^2 + 26558x^{19}y + 34000x^{19} + 272x^{18}y^3 + 7858x^{18}y^2 + 35378x^{18}y + 37080x^{18} + 780x^{17}y^3 + 12818x^{17}y^2 + 43778x^{17}y + 38165x^{17} + 7x^{16}y^4 + 1793x^{16}y^3 + 18893x^{16}y^2 + 50463x^{16}y + 37080x^{16} + 60x^{15}y^4 + 3478x^{15}y^3 + 25358x^{15}y^2 + 54278x^{15}y + 34000x^{15} + 218x^{14}y^4 + 5842x^{14}y^3 + 31166x^{14}y^2 + 54538x^{14}y + 29400x^{14} + 551x^{13}y^4 + 8668x^{13}y^3 + 35228x^{13}y^2 + 51198x^{13}y + 23940x^{13} + 6x^{12}y^5 + 1123x^{12}y^4 + 11515x^{12}y^3 + 36693x^{12}y^2 + 44863x^{12}y + 18320x^{12} + 46x^{11}y^5 + 1918x^{11}y^4 + 13770x^{11}y^3 + 35228x^{11}y^2 + 36638x^{11}y + 13140x^{11} + 132x^{10}y^5 + 2800x^{10}y^4 + 14888x^{10}y^3 + 31166x^{10}y^2 + 27818x^{10}y + 8800x^{10} + 272x^9y^5 + 3577x^9y^4 + 14596x^9y^3 + 25358x^9y^2 + 19558x^9y + 5475x^9 + 6x^8y^6 + 462x^8y^5 + 4039x^8y^4 + 12957x^8y^3 + 18893x^8y^2 + 12663x^8y + 3144x^8 + 25x^7y^6 + 646x^7y^5 + 4019x^7y^4 + 10370x^7y^3 + 12818x^7y^2 + 7498x^7y + 1652x^7 + 50x^6y^6 + 754x^6y^5 + 3532x^6y^4 + 7452x^6y^3 + 7858x^6y^2 + 4018x^6y + 784x^6 + 77x^5y^6 + 760x^5y^5 + 2737x^5y^4 + 4760x^5y^3 + 4294x^5y^2 + 1918x^5y + 330x^5 + 4x^4y^7 + 101x^4y^6 + 655x^4y^5 + 1837x^4y^4 + 2651x^4y^3 + 2050x^4y^2 + 798x^4y + 120x^4 + 6x^3y^7 + 97x^3y^6 + 464x^3y^5 + 1042x^3y^4 + 1258x^3y^3 + 833x^3y^2 + 280x^3y + 36x^3 + 8x^2y^7 + 77x^2y^6 + 273x^2y^5 + 490x^2y^4 + 490x^2y^3 + 273x^2y^2 + 77x^2y + 8x^2 + 8xy^7 + 49xy^6 + 126xy^5 + 175xy^4 + 140xy^3 + 63xy^2 + 14xy + x + y^8 + 7y^7 + 21y^6 + 35y^5 + 35y^4 + 21y^3 + 7y^2 + y$ }\\
\hline

\end{tabular}
\end{adjustwidth}

\vspace{0.5cm}
\textbf{Appendix 3.} Numbers of spanning trees of pyrene chains and triphenylene chains.
\begin{table}[H]
\centering
\begin{tabular}{|c|c|c|c|c|}
\hline
 &	$n=1$
&$n=2$&$n=3$&$n=4$\\
\hline

$R_n$&	1092
&1150848 &1212779520&1278043619328\\
\hline

$T_n$&	1188
&1369728 &1579253616 &1820830109040\\
\hline
\end{tabular}
\end{table}

\end{document}